\begin{document}



\let\goth\mathfrak


\newcommand*{\Land}{\;\land\;}
\newcommand\theoremname{Theorem}
\newcommand\lemmaname{Lemma}
\newcommand\corollaryname{Corollary}
\newcommand\propositionname{Proposition}
\newcommand\factname{Fact}
\newcommand\remarkname{Remark}
\newcommand\examplename{Example}

\newtheorem{thm}{\theoremname}[section]
\newtheorem{lem}[thm]{\lemmaname}
\newtheorem{cor}[thm]{\corollaryname}
\newtheorem{prop}[thm]{\propositionname}
\newtheorem{fact}[thm]{\factname}
\newtheorem{exmx}[thm]{\examplename}
\newenvironment{exm}{\begin{exmx}\normalfont}{\end{exmx}}

\newtheorem{rem}[thm]{\remarkname}

\def\myend{{}\hfill{\small$\bigcirc$}}

\newtheorem{reprx}[thm]{Representation}
\newenvironment{repr}{\begin{reprx}\normalfont}{\myend\end{reprx}}
\newtheorem{cnstrx}[thm]{Construction}
\newenvironment{constr}{\begin{cnstrx}\normalfont}{\myend\end{cnstrx}}
\def\classifname{Classification}
\newtheorem{classification}[thm]{\classifname}
\newenvironment{classif}{\begin{classification}\normalfont}{\myend\end{classification}}

\bibliographystyle{acm}
\newcommand{\minus}{{\bf -}}

\title{Multiplied configurations characterized by their closed substructures}
\author{Krzysztof Petelczyc \and Krzysztof Pra\.zmowski}
\pagestyle{myheadings}
\markboth{K. Petelczyc, K. Pra{\.z}mowski}{Closed substructures covering}

\def\LineOn(#1,#2){\overline{{#1},{#2}\rule{0em}{1,5ex}}}
\def\PointOf(#1,#2){{#1}\sqcap{#2}}
\def\lines{{\cal L}}
\def\collin{\sim}
\def\chain(#1){{#1}^{\ast}}
\def\inc{\mathrel{\rule{2pt}{0pt}\rule{1pt}{9pt}\rule{2pt}{0pt}}}
\def\dist{\mathrm{dist}}
\def\DifSpace(#1,#2){{\bf D}({#1},{#2})}
\def\CyclSpacex(#1,#2,#3){{#1}\circledast_{{#2}}{#3}}
\def\CyclSpace(#1,#2,#3){\circledast_{{#1}}({#2},{#3})}
\def\entier{\mathrm{E}}
\def\invers(#1){{#1}^{{-1}}}
\def\nwd(#1,#2){\mathrm{GCD}(#1,#2)}
\def\gras(#1,#2){{\bf G}_{#1}({#2})}
\def\otocz(#1,#2){{#1}_{({#2})}}
\def\alf(#1,#2){\mbox{${\strut}^{\alpha}\mkern-2mu{#1}_{({#2})}$}}
\def\bet(#1,#2){\mbox{${\strut}^{\beta}\mkern-2mu{#1}_{({#2})}$}}
\def\embfunc{\varepsilon}
\def\emb(#1,#2){\embfunc_{{#1}}({#2})}
\def\img{\mathrm{im}}
\def\ciach#1{\;\wr_{{#1}}\;}
\def\dod{\mathrel{\wr}}
\def\baero{\mathrel{\rho}}
\def\kor{\mathrel{\varkappa}}
\def\dod{\mathrel{\wr}}
\def\baer{\mathrel{\diamond}}
\def\Baer{\mathrel{\scriptstyle{\blacklozenge}}}
\def\corr{\mathrel{\vartriangleleft}}
\def\pls{partial linear space}
\def\id{\mathrm{id}}
\def\mappedby#1{%
{\rule{0pt}{2.5ex}}\mkern8mu{\longmapsto\mkern-25mu{\raise1.3ex\hbox{$#1$}}}
\mkern20mu{\rule{0pt}{4pt}}
}
\def\rank(#1){\mathrm{r}(#1)}
\def\qdots{\rule{0pt}{10pt}%
 {\raise-0.6ex\hbox{$\cdot$}}\mkern1.2mu{\raise0.1ex\hbox{$\cdot$}}%
 \mkern1.2mu{\raise0.8ex\hbox{$\cdot$}}
 }

\newcommand*{\struct}[1]{{\ensuremath{\langle #1 \rangle}}}
\newcommand*{\sub}{\raise.5ex\hbox{\ensuremath{\wp}}}
\def\Aut{{\text{\rm Aut}}}

  \def\pforall#1{(\forall{#1})}
  \def\pexists#1{(\exists{#1})}

\newcounter{sentence}
\def\thesentence{\roman{sentence}}
\def\labelsentence{\upshape(\thesentence)}

\newenvironment{sentences}{%
        \list{\labelsentence}
          {\usecounter{sentence}\def\makelabel##1{\hss\llap{##1}}
            \topsep3pt\leftmargin0pt\itemindent40pt\labelsep8pt}%
  }{%
    \endlist}

\maketitle

\begin{abstract}

We propose some new method of constructing configurations, which consists 
in consecutive inscribing copies of one underlying configuration.
A uniform characterization of the obtained class and the one introduced in
\cite{corset}, which makes use of some covering by family of closed 
substructures, is given.
\\
MSC 2000: 51D20, 51E26 \\
Key words: partial linear space, duality, correlation, closed substructure,
Shult axiom.
\end{abstract}

\section{Introduction}

In a series of papers (cf. \cite{petel}, \cite{corset})
we have defined (and studied) two operations of "multiplying" \pls s;
the first operation can be applied to any \pls \ determined by a
quasi difference set (with this set ditinguished),
and the second one to any self dual structure
(with its correlation distinguished).
In both cases the resulting \pls \ $\goth N$
can be covered by some family $\cal B$ of its closed substructures,
each one isomorphic to the structure $\goth M$ which was multiplied.
In many examples this family $\cal B$ remains invariant under automorphisms of
$\goth N$;
if so the automorphisms of $\goth N$ can be easily determined.
\newline
There are some other operations which have this property. In section
\ref{sec:multdual} 
we give an example of such  a simple and natural operation, which does not
coincide
with the operations introduced in \cite{corset}.

\par
It turns out that some abstract, combinatorial properties
of the covering $\cal B$ are sufficient to characterize the geometry of $\goth N$.
Roughly speaking, two observations are crucial.
Let ${\cal B} = \big{\{} B_i \colon i \in I  \big{\}}$.
\begin{itemize}\def\labelitemi{\strut}\itemsep-2pt
\item
  If $B_i = (B'_i,B''_i)\in{\cal B}$ then on every line $L \in B''_i$
  there is exactly one point $L^\infty$ of $\goth N$ not in $B'_i$.
  All these "added" points are the points of some other
  closed substructure $B_j = (B'_j,B''_j)\in{\cal B}$.
\item
  This leads to a map $\baero \colon i \longmapsto j$, and maps
  $B''_i \ni L \longmapsto L^\infty \in B'_{\baero(i)}$.
  The first one determines on $I$ the structure of a cyclic group.
  The second are, in known examples, correlations.
\end{itemize}
In the paper we propose a system of conditions which express these properties in
a more elementary language and we prove the representation theorem.
Most of the previously investigated multiplied \pls s
satisfy this condition system, but the class of models satyfsying our conditions is even much wider.


\section{Multyplying by dualisation}     \label{sec:multdual}

First, we recall that an incidence structure $\goth M= \struct{S,\lines,\inc}$
with $\inc \; \subset S \times \lines$, is a {\em partial linear space} (PLS)
provided that its every line is on at least two points, every point is on at least two lines
and the following uniqueness condition is satisfied
$$\text{ if } a,b\in S,\; k,m\in\lines,\text{ and } a,b\inc k,m,
  \text{ then } a=b \text{ or } k=m.$$
Let us adopt a convention that in $\goth M$ sets $S$ and $\lines$ are disjoint. 
Now we introduce some notations we use further.
We write $k=\LineOn({a},{b})$ if $S \ni a,b \inc k\in \lines $ and $a\neq b$;
similarly $k\sqcap m=a$ means that $a\inc k,m$ and $k\neq m$.
The phrase $k\sqcap m=\emptyset$ is used when there is no such $a$
that $a\inc k,m$. If we write $a\sim b$ it means that there exists
in ${\goth M}$ a line $k$ such that $a,b\inc k$.
The degree of a point $p$ in ${\goth M}$ we denote by
$r_p:=|\{l\in\lines : p\inc l\}|$, and dually the size of a line
$r_l:=|\{p\in S: p\inc l\}|$.
We call the structure $\goth M$ {\em Shultenian} (or $\Gamma$-{\em space}) (cf. \cite{Cohen})
if for every triangle $(a,b,c)$
of $\goth M$ and for every point $d$ on the line $\LineOn(a,b)$ holds $c\sim d$.     

Let us remind
the construction of multyplying partial linear spaces using their correlations, 
considered in \cite{corset}.
\par
Let ${\goth M}_0=\struct{\mathrm{S}_0,\mathrm{L}_0,\inc_0}$ be a partial linear space 
with a correlation $\varkappa_0$ and let $k>2$ be an integer.
We define
${\goth M} = \CyclSpacex(k,\varkappa_0,{\goth M_0})$ as follows.
Let $M = C_k\times \mathrm{S}_0$, $\lines = C_k\times \mathrm{L}_0$.
We apply the following convention:
\begin{itemize}\def\labelitemi{--}\itemsep-2pt
\item  $(i,a)$ is a point, where $i\in C_k$ and $a\in S_0$;
\item  $[i,l]$ is a line,
       where $i\in C_k$ and $l\in\mathrm{L}_0$.
\end{itemize}
Then, the relation $\inc$ of incidence of
${\goth M}$ is characterized by the condition
\begin{equation}\label{wz:corelinc}
  (i,a)\inc[j,l] \text{ iff, either } i=j \text{ and } a\inc_0 l,\text{ or }
  i=j+1 \text{ and } a=\varkappa_0(l).
\end{equation}
and we set  ${\goth M}=\struct{M,{\lines},\inc}$.
The structure
  $$\CyclSpacex(k,\varkappa_0,{\goth M}_0) := \struct{M,{\lines},\inc}$$
will be referred to
as {\em a correlative multiplying of} ${\goth M}_0$.

\par
In this section
we adopt a dualisation (instead of correlation) as a convenient tool to multiply
partial linear spaces. Let ${\goth M}_0=\struct{\mathrm{S}_0,\mathrm{L}_0,\inc_0}$ be a
partial linear space, and
$\goth M_0^{\circ}$ be the partial linear space dual to ${\goth M}_0$.
We build a new structure $\goth M=\CyclSpacex(G,\circ,{\goth M}_0)$, where $G$ is a
cyclic group of an even (greater than $2$) or infinite rank, as follows.
Let $i\in G$, we put

$$M_i=\left\{ \begin{array}{ll}
               \{i\}\times \mathrm{S}_0 & \text{ for even } i \\
               \{i\}\times \mathrm{L}_0 & \text{ for odd } i,
              \end{array} \right. $$

$${\lines}_i=\left\{ \begin{array}{ll}
                      \{i\}\times \mathrm{L}_0 & \text{ for even } i \\
                      \{i\}\times \mathrm{S}_0 & \text{ for odd } i.
                     \end{array} \right. $$
Then we set
$M := \bigcup_{i\in G} M_i$ and
$\lines := \bigcup_{i\in G} \lines_i$.
According to the terminology, where $(i,a)$ is a point and $[i,b]$ is a line
of $\goth M$, we introduce the relation $\inc \; \subset M \times \lines$
as follows:
\begin{equation}\label{wz:relinc}
  (i,a)\inc[j,b] \text{ iff, either } i=j \text{ and } a\inc_0 b\;
  ( \text{or } b \inc_0 a),
  \text{ or }
  i=j+1 \text{ and } a=b.
\end{equation}
Finally, we put ${\goth M}=\struct{M,\lines,\inc}$.
\newline
Obviously, the structure $\CyclSpacex(G,\circ,{\goth M}_0)$ is a PLS.
Another immediate observation, based on the definition, is that
$|M|= |\lines|= \frac{|G|}{2}\cdot |\mathrm{S}_0|+\frac{|G|}{2}\cdot|\mathrm{L}_0|$.
The following is evident.
\begin{lem}\label{lem:parameters}
Assume that ${\goth M}_0$ is a PLS with constant point degree $\kappa$ 
and line size $\rho$. Let $a$ be a point or a line of ${\goth M}_0$, $i\in G$
and $\goth M=\CyclSpacex(G,\circ,{\goth M}_0)$.\\
$\begin{array}{cccc}
\begin{array}{l}
\text {If } (i,a) \text{ is a point of } \goth M \\
\text{then the degree of } (i,a) \text{ is} \\
\begin{array}{ccc}
\kappa+1 & \text{ if } & i \text{ is even}\\
\rho +1 & \text{ if } & i \text{ is odd}.
\end{array} 
\end{array}
& & &
\begin{array}{l}
\text {If } (i,a) \text{ is a line of } \goth M \\ 
\text{then the size of } (i,a) \text{ is} \\
\begin{array}{ccc}
\rho+1 & \text{ if } & i \text{ is even}\\
 \kappa+1 & \text{ if } & i \text{ is odd}.
\end{array}
\end{array} 
\end{array}$\\
Consequently, in $\goth M$ there are
$\frac{|G|}{2}\cdot |\mathrm{S}_0|$ points and the same number of lines with degree and size 
(respectively)
$\kappa+1$, and $\frac{|G|}{2}\cdot|\mathrm{L}_0|$ points and lines with
degree and size $\rho+1$. 
\end{lem}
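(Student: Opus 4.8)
The plan is to prove the statement by a direct, purely mechanical case analysis that unwinds the incidence condition (\ref{wz:relinc}); since the claim asserts nothing beyond a count of incidences, no clever idea is needed, only careful bookkeeping. The one structural point I would fix at the outset is that the parity ``even $i$'' / ``odd $i$'' is well defined on $G$: this is exactly why the rank of $G$ was required to be even (or infinite), so that the reduction $G \to C_2$ exists and splits the index set into two classes. For finite $G$ these classes have equal size $|G|/2$; in the infinite case the same argument runs with the displayed counts read as cardinals. I would record this remark first, together with $|\{i\in G : i \text{ even}\}| = |\{i\in G : i \text{ odd}\}| = |G|/2$.

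For the degree of a point $(i,a)$ I would partition the lines $[j,b]$ incident with it according to the two disjoint clauses of (\ref{wz:relinc}). The clause $j=i$ forces $[i,b]\in\lines_i$ and reduces incidence in $\goth M$ to incidence in $\goth M_0$: if $i$ is even then $a\in\mathrm{S}_0$ and $b$ ranges over the lines of $\goth M_0$ through $a$, giving $\kappa$ lines; if $i$ is odd then $a\in\mathrm{L}_0$ and $b$ ranges over the points of $\goth M_0$ on $a$, giving $\rho$ lines. The \emph{shift} clause $i=j+1$ forces $b=a$, so it contributes the single line $[i-1,a]$; here the alternating definition of $M_i$ versus $\lines_i$ is precisely what guarantees that $[i-1,a]$ really is a line of $\goth M$ and is unique, so the shift always adds exactly $1$. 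Summing, the degree is $\kappa+1$ for even $i$ and $\rho+1$ for odd $i$. The size of a line $[i,a]$ follows by the dual argument, partitioning its incident points $(m,c)$ by the same two clauses: the internal part ($m=i$) contributes $\rho$ or $\kappa$ according to parity, and the shift ($m=i+1$, $c=a$) contributes the unique point $(i+1,a)$, again adding exactly $1$, yielding size $\rho+1$ for even $i$ and $\kappa+1$ for odd $i$.

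The ``consequently'' count then follows by multiplying. There are $|G|/2$ even indices, each carrying $|\mathrm{S}_0|$ points (of degree $\kappa+1$) and $|\mathrm{L}_0|$ lines (of size $\rho+1$), and $|G|/2$ odd indices, each carrying $|\mathrm{L}_0|$ points (of degree $\rho+1$) and $|\mathrm{S}_0|$ lines (of size $\kappa+1$). Matching the point-family at even indices with the line-family at odd indices, and the point-family at odd indices with the line-family at even indices, yields the asserted equal totals $\frac{|G|}{2}|\mathrm{S}_0|$ (degree and size $\kappa+1$) and $\frac{|G|}{2}|\mathrm{L}_0|$ (degree and size $\rho+1$). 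There is no genuine obstacle, as the ``evident'' label signals; the only points demanding vigilance are not conflating the roles of $\mathrm{S}_0$ and $\mathrm{L}_0$ across the two parities, and checking that the internal lines (index $i$) and the shifted line (index $i-1$) are automatically distinct, so that the ``$+1$'' is never silently absorbed into the $\kappa$ or $\rho$.
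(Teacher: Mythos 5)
Your proof is correct and matches the paper's approach: the paper labels the lemma ``evident'' (its source even contains a suppressed proof sketch arguing exactly as you do, that the shift clause of \eqref{wz:relinc} adds one to the degree or size inherited from ${\goth M}_0$, followed by the same count), so your case split on the two disjoint clauses of the incidence relation is precisely the intended verification. Your two added points of vigilance --- that parity on $G$ is well defined because $G$ was required to have even or infinite rank, and that the shifted line $[i-1,a]$ is distinct from every internal line $[i,b]$ since $i\neq i-1$, so the ``$+1$'' is never absorbed --- are exactly the details worth recording.
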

%
The construction discussed in the paper has some advantages, when we compare it with
the one considered in \cite{corset}. Firstly, it can be applied to any partial linear space, not
necessarily admitting any correlation.
Note, however, that consequently, we arrive to not necessarily (regular) configurations.
The obtained structures are more artificial. 
\begin{exm}\label{exm:segment}
Let ${\goth M}_0$ be an incidence structure consisting of one line $c$ and two points $a,b$ on $c$,
i.e. ${\goth M}_0=\struct{\{a,b\},\{c\},\{a,b\}\times\{c\}}$. Such structure we call a segment.
Note, that point degree and line size in a segment are distinct.
Let $G=C_6$ and consider $\goth M=\CyclSpacex(G,\circ,{\goth M}_0)$ (see Figure \ref{fig:multex}).
 In $\goth M$ we observe two classes of points and of lines.
 One of them, derived from ${\goth M}_0$, contains points with degree $2$ and lines with
 size $3$. The second one, derived from $\goth M_0^{\circ}$, contains points with degree $3$ and lines with
 size $2$. As we see, the obtained structure is not a regular configuration.
\end{exm}
\begin{figure}[!h]
    \begin{center}
    \includegraphics[scale=0.6]{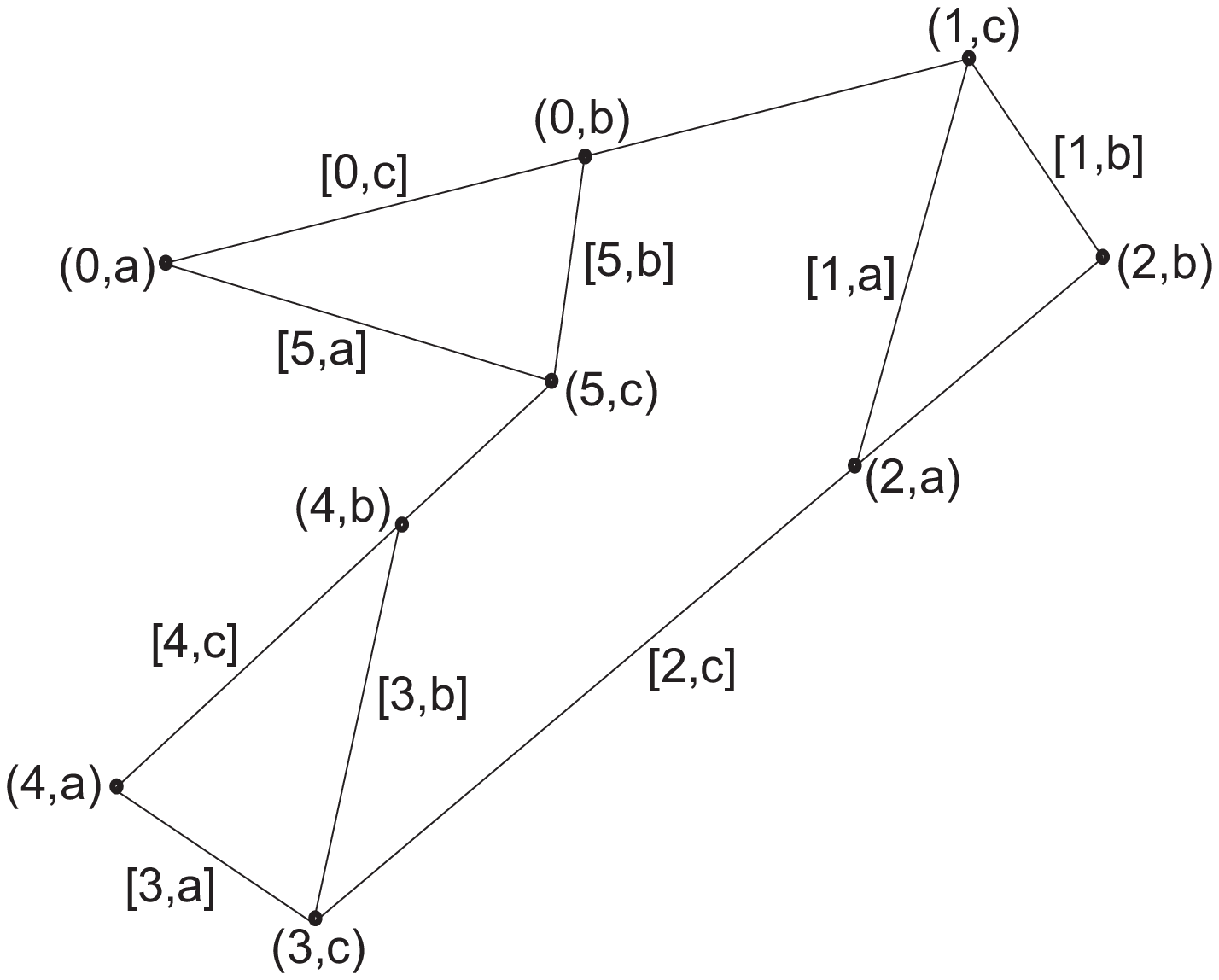}
    \end{center}
\caption{The configuration $\CyclSpacex(C_6,\circ,{\goth M}_0)$, where
${\goth M}_0$ is a segment (cf. Example \ref{exm:segment})
}
\label{fig:multex}
\end{figure}
\begin{prop}
The map $\kor=(\kor',\kor'')$,
$\kor':M \longrightarrow \lines$, $\kor'':\lines \longrightarrow  M$ defined by the formula
$$\kor'((i,x))=[1-i,x],\;\;\; \kor''([i,y])=(1-i,y)$$
is an involutive correlation of
$\goth M=\CyclSpacex(G,\circ,{\goth M}_0)=\struct{M,\lines,\inc}$.
Consequently, $\goth M $  is self dual.
\end{prop}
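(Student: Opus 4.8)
The plan is to verify directly from the definition of $\kor$ that it is an involutive correlation, i.e. that $\kor$ is a bijection interchanging points and lines, that it reverses incidence, and that $\kor^2 = \id$. I would organize the argument into three short steps, treating each defining property in turn, since the map is given by an explicit formula and the structure of $\goth M$ is fully described by \eqref{wz:relinc}.

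First I would check that $\kor$ is well defined as a map interchanging points and lines, and that it is a bijection. The subtle point here is that the ``type'' of $(i,x)$ depends on the parity of $i$: recall that for even $i$ the first coordinate set $M_i$ ranges over $\mathrm S_0$ while $\lines_i$ ranges over $\mathrm L_0$, and conversely for odd $i$. Applying $\kor'$ sends the index $i$ to $1-i$, which flips parity, so a point $(i,x)$ with $x\in\mathrm S_0$ (even $i$) is sent to the line $[1-i,x]$, and since $1-i$ is odd the line set $\lines_{1-i}$ indeed ranges over $\mathrm S_0$ — so $x$ lands in the correct coordinate set. I would record this parity bookkeeping carefully, since it is exactly what makes the formula type-correct; the same check for odd $i$ and for $\kor''$ is symmetric. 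Bijectivity and involutivity then follow at once from $1-(1-i)=i$: composing $\kor''\circ\kor'$ sends $(i,x)\mapsto[1-i,x]\mapsto(1-(1-i),x)=(i,x)$, giving $\kor^2=\id$, which simultaneously yields that $\kor$ is its own inverse and hence a bijection.

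The main step, and the one I expect to carry the real content, is verifying that $\kor$ reverses incidence, that is $(i,a)\inc[j,b]$ if and only if $\kor''([j,b])\inc\kor'((i,a))$, namely $(1-j,b)\inc[1-i,a]$. I would unfold both sides using \eqref{wz:relinc}. The incidence $(i,a)\inc[j,b]$ holds in one of two cases: either $i=j$ with $a\inc_0 b$ or $b\inc_0 a$, or $i=j+1$ with $a=b$. For the image incidence $(1-j,b)\inc[1-i,a]$, the two cases of \eqref{wz:relinc} read: either $1-j=1-i$ (i.e. $i=j$) with $b\inc_0 a$ or $a\inc_0 b$, or $1-j=(1-i)+1$ (i.e. $i=j+1$) with $b=a$. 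These are precisely the same two disjunctions — the symmetric ``$a\inc_0 b$ or $b\inc_0 a$'' in the definition is exactly what makes the $i=j$ case self-matching, and the $i=j+1$ case matches because $a=b$ is symmetric. So the two incidence conditions are literally equivalent, and this is the heart of why the dualising construction is self-dual.

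The anticipated obstacle is not any deep computation but rather the parity/type bookkeeping: one must be consistent about which of $\mathrm S_0$, $\mathrm L_0$ each coordinate ranges over as the index shifts by $1$, and confirm that the symmetric clause ``$a\inc_0 b$ (or $b\inc_0 a$)'' in \eqref{wz:relinc} is genuinely needed to absorb the swap between a structure and its dual. Once incidence reversal, bijectivity, and $\kor^2=\id$ are established, $\kor$ is by definition an involutive correlation of $\goth M$, and the existence of a correlation immediately gives that $\goth M$ is self dual, which is the concluding assertion.
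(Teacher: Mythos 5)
Your proposal is correct and takes essentially the same route as the paper's own proof: a direct verification from \eqref{wz:relinc}, noting bijectivity and then splitting incidence reversal into the cases $i=j$ (where the symmetric clause ``$a\inc_0 b$ or $b\inc_0 a$'' makes the condition self-matching) and $i=j+1$ (where $a=b$ carries over). You are in fact more explicit than the paper about the parity/type bookkeeping and about deriving bijectivity from $\kor^2=\id$, but the underlying argument is identical.
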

\begin{proof}
The map $\kor$ is a bijection of the structure $\goth M$, which transforms the
set of points onto the set of lines and dually. Consider $(i,a)$ and
$[j,b]$ such that $(i,a)\inc[j,b]$. The images of them under the map $\kor$ are
$[1-i,a]$ and $(1-j,b)$, respectively.

It is seen, that for $i=j$ the map $\kor$ preserves the relation $\inc$ in 
$\goth M$.
In the case when $i=j+1$ we get $[1-i,a]=[-j,a]$ and
$\kor''([j,b])\inc \kor'((i,a))$
since in that case $a=b$.
\end{proof}
\begin{prop}
Let $\goth M_0^{\circ}$ be dual to a partial linear space $\goth M_0$.
Then
$\CyclSpacex(G,\circ,{\goth M}_0)\cong \CyclSpacex(G,\circ,{\goth M}_0^{\circ})$.
\end{prop}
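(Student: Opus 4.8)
The plan is to realize the isomorphism as a shift of the group coordinate by one unit. The key observation is that the same-level clause of \eqref{wz:relinc}, together with its ``or $b \inc_0 a$'' alternative, places a copy of $\goth M_0$ at each even index $i\in G$ and a copy of $\goth M_0^{\circ}$ at each odd index. Passing to $\CyclSpacex(G,\circ,{\goth M}_0^{\circ})$ merely interchanges the two parities: now $\goth M_0^{\circ}$ occupies the even indices and $\goth M_0$ the odd ones. Hence any map reversing the parity of the index ought to carry one structure onto the other. Note that the very notion of parity on $G$ is well defined precisely because $\rank(G)$ is even or infinite, so this hypothesis is exactly what makes the shift available (and, indeed, what makes the construction itself meaningful).

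Concretely, I would define $\phi(i,a)=(i+1,a)$ on points and $\phi[j,b]=[j+1,b]$ on lines, and first verify that $\phi$ sends points (resp.\ lines) of $\goth M=\CyclSpacex(G,\circ,{\goth M}_0)$ to points (resp.\ lines) of $\goth M'=\CyclSpacex(G,\circ,{\goth M}_0^{\circ})$. This reduces to a short parity bookkeeping: a point $(i,a)$ with $i$ even has $a\in\mathrm{S}_0$, and in $\goth M'$ the odd index $i+1$ indeed carries $\mathrm{S}_0$ among its points; the three remaining parity cases are checked identically. Since the inverse map is the shift $i\mapsto i-1$, $\phi$ is a bijection between the point sets and between the line sets.

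The core step is to confirm that $\phi$ preserves $\inc$. Writing out the two clauses of \eqref{wz:relinc} for $\goth M'$ at the shifted indices, the clause ``$i=j+1$ and $a=b$'' transfers verbatim, since both coordinates advance by one. For the same-level clause I would use that $\inc_0^{\circ}$ is the converse of $\inc_0$, so that ``$a\inc_0^{\circ}b$ or $b\inc_0^{\circ}a$'' is literally the same disjunction as ``$a\inc_0 b$ or $b\inc_0 a$''. Thus the symmetric form of this clause is exactly what lets the identification survive the swap of parities. As every equivalence runs in both directions and $\phi$ is already a bijection, it follows that $\phi$ is an isomorphism, whence the two structures are isomorphic.

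I expect the only genuinely delicate point to be keeping this bookkeeping honest: confirming that after the shift the fibre $\mathrm{S}_0$ or $\mathrm{L}_0$ attached to each index lands on the correct (point or line) side of $\goth M_0^{\circ}$, and recognizing that the well-definedness of parity on $G$ is precisely the content of the even/infinite rank hypothesis. Beyond that, the argument is a direct term-by-term comparison of the defining incidence \eqref{wz:relinc} for the two multiplied structures.
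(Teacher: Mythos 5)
Your proof is correct and coincides with the paper's own argument: the paper also uses the index shift $\delta((i,a))=(i+1,a)$ (on both $G\times \mathrm{S}_0$ and $G\times \mathrm{L}_0$) as the isomorphism, merely asserting the verification that you spell out. Your parity bookkeeping and the observation that the symmetric same-level clause of \eqref{wz:relinc} is invariant under passing to $\inc_0^{\circ}$ are exactly the details implicit in the paper's one-line proof.
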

\begin{proof}
Note that the map $\delta$ of the form
$\delta((i,a))=(i+1,a)$, where $(i,a)\in G\times \mathrm{S}_0$ or 
$(i,a)\in G\times \mathrm{L}_0$ transforms
$\CyclSpacex(G,\circ,{\goth M}_0)$ onto 
$\CyclSpacex(G,\circ,{\goth M}_0^{\circ})$, and it is a required isomorphism.
\end{proof}
\begin{prop}
Let $\goth M=\CyclSpacex(C_k,\circ,{\goth M}_0)$, where $k$ is even.
If ${\goth M}_0$ is a self dual structure with an involutive correlation
$\kor_0$ then
$\goth M\cong \CyclSpacex(k,\kor_0,{\goth M}_0)$.
\end{prop}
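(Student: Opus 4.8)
The plan is to write down an explicit isomorphism between the two structures and to verify that it preserves incidence level by level. Write $\goth N := \CyclSpacex(k,\kor_0,{\goth M}_0)$ for the correlative multiplying and $\goth M = \CyclSpacex(C_k,\circ,{\goth M}_0)$ for the dualisation multiplying. Both have their points and lines indexed by $C_k$; the two constructions agree on even levels and differ only in how the odd levels are labelled. Indeed, by \eqref{wz:relinc} a point of $\goth M$ at an odd level $i$ is a pair $(i,a)$ with $a\in\mathrm{L}_0$, and a line at an odd level is $[i,b]$ with $b\in\mathrm{S}_0$, whereas in $\goth N$ every point has its second coordinate in $\mathrm{S}_0$ and every line has its second coordinate in $\mathrm{L}_0$. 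The involutive correlation $\kor_0$, which interchanges $\mathrm{S}_0$ and $\mathrm{L}_0$ and reverses incidence, is exactly the device that relabels these odd levels, and this is where the self-duality hypothesis enters.

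Accordingly, I would define $\phi=(\phi',\phi'')$ to be the identity on even levels and to twist odd levels by $\kor_0$:
\[
\phi'((i,a)) = \begin{cases} (i,a) & i \text{ even},\\ (i,\kor_0(a)) & i \text{ odd},\end{cases}
\qquad
\phi''([i,b]) = \begin{cases} [i,b] & i \text{ even},\\ [i,\kor_0(b)] & i \text{ odd}.\end{cases}
\]
At an odd level $\kor_0$ carries the line-label $a\in\mathrm{L}_0$ of a point to a genuine point-label $\kor_0(a)\in\mathrm{S}_0$, and the point-label $b\in\mathrm{S}_0$ of a line to a line-label $\kor_0(b)\in\mathrm{L}_0$, so $\phi'$ and $\phi''$ land in the point set and line set of $\goth N$ respectively. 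Since $\kor_0$ is a bijection interchanging $\mathrm{S}_0$ and $\mathrm{L}_0$, both maps are bijective; the inverse $\phi^{-1}$ is given by the same recipe (identity on even levels, $\kor_0$ on odd levels), using $\kor_0^2=\id$.

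The heart of the argument is checking $(i,a)\inc[j,b]$ in $\goth M$ if and only if $\phi'((i,a))\inc\phi''([j,b])$ in $\goth N$. By \eqref{wz:relinc} either $i=j$ or $i=j+1$, and I would split each according to the parity of $i$. The even same-level case reduces at once to the first clause of \eqref{wz:corelinc} (the images are unchanged). The two consecutive-level cases reduce to the second clause of \eqref{wz:corelinc}: in the $i$-odd sub-case directly from $a=b$, and in the $i$-even sub-case via the involutivity $\kor_0(\kor_0(b))=b$. The only case that genuinely exploits $\kor_0$ being a correlation is the odd same-level case: there the defining condition is $b\inc_0 a$ among the dual labels, and the incidence-reversing property of $\kor_0$ turns this into $\kor_0(a)\inc_0\kor_0(b)$, which is precisely the same-level clause of \eqref{wz:corelinc} applied to the images.

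I expect the main obstacle to be purely bookkeeping rather than conceptual: one must keep straight, at each parity, which coordinate lives in $\mathrm{S}_0$ and which in $\mathrm{L}_0$, and make sure the two incidence relations \eqref{wz:relinc} and \eqref{wz:corelinc} are being compared at the matching level so that the disjunction ``$a\inc_0 b$ or $b\inc_0 a$'' is read off correctly. Once the case table is laid out, each entry is a one-line check invoking either $\kor_0^2=\id$ or the incidence-reversal of $\kor_0$, and the equivalence of incidence (not merely one implication) follows because $\phi^{-1}$ has the same shape as $\phi$, so the backward direction is the identical case analysis applied to the inverse map.
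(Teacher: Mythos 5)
Your proposal is correct and takes essentially the same route as the paper: the paper's proof defines exactly the same map (identity on even levels, $\kor_0$ on odd levels, applied to both points and lines) and declares the bijectivity and incidence-preservation ``evident.'' Your four-case check --- involutivity $\kor_0^2=\id$ in the even consecutive-level case and the incidence-reversing property of $\kor_0$ in the odd same-level case --- simply spells out the details the paper leaves to the reader.
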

\begin{proof}
Let us consider the following map
$\delta: \goth M \longrightarrow \CyclSpacex(k,\kor_0,{\goth M}_0)$:
$$\delta((i,x))=\left\{ \begin{array}{ll}
               (i,x) & \text{ for } i=2t\\
               (i,\kor_0(x)) & \text{ for } i=2t+1,
              \end{array} \right.$$
where $(i,x)\in C_k\times(\mathrm{S}_0\cup \mathrm{L}_0)$.
It is evident that $\delta$ is a bijection and it preserves
the relation of incidence.
Hence, this map is a required isomorphism.
\end{proof}


\subsection{Covering by closed substructures}

Now, we investigate some properties  of $\goth M=\CyclSpacex(G,\circ,{\goth M}_0)$
involving closed substructures. We are interested in similar results
to that obtained in \cite{corset}, but in our more general settings.
A substructure $\struct{B',B''}$ of
$\struct{M,\lines,\inc}$ is 
a {\em closed substructure} if it satisfies the following two conditions:
\begin{enumerate}[1]
\item
$\pforall{a_1,a_2\in B'}\pforall{l\in\lines}
[a_1,a_2\inc l \land a_1\neq a_2 \implies l\in B'']$,
\item
$\pforall{l_1,l_2\in B''}\pforall{a\in M}
[a\inc l_1,l_2 \land l_1\neq l_2 \implies a\in B']$.
\end{enumerate}
\begin{fact}\label{fct:imgBaer}
  Let ${\goth M}_0=\struct{\mathrm{S}_0,\mathrm{L}_0,\inc_0}$ be a partial linear space
  and $i \in G$.
  Consider 
  ${\goth M}=\CyclSpacex(G,\circ,{\goth M}_0)=\struct{M,\lines,\inc}$ and the map
  $\embfunc_i = (\embfunc'_i,\embfunc''_i)$ defined by

  $$
    \embfunc'_i\colon \left\{ \begin{array}{ll}
               a \mapsto (i,a) \in M        & \text{ for } i=2t\\
               a \mapsto [i,a] \in \lines   & \text{ for } i=2t+1
              \end{array} \right. $$

  $$\embfunc''_i \colon \left\{ \begin{array}{ll}
            l \mapsto [i,l] \in \lines & \text{ for } i=2t\\
            l \mapsto (i,l) \in M     & \text{ for } i=2t+1
             \end{array} \right.$$
for $a\in \mathrm{S}_0$, $l\in \mathrm{L}_0$.
  The image $\img(\embfunc_i)$
  of ${\goth M}_0$ under $\embfunc_i$ is a closed substructure of
  $\goth M$ for every $i \in G$. It is isomorphic to ${\goth M}_0$ or to
  ${\goth M}^{\circ}_0$, as $i$ is even or odd, respectively.
\end{fact}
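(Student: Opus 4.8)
The plan is to identify the image $\img(\embfunc_i)$ concretely as the pair $(M_i,\lines_i)$ and then to establish the two assertions — isomorphism type and closedness — separately, both being straightforward once the incidence \eqref{wz:relinc} is restricted to a single level. First I would note, reading off the defining formulas, that for even $i$ the map $\embfunc'_i$ carries $\mathrm{S}_0$ bijectively onto $M_i$ and $\embfunc''_i$ carries $\mathrm{L}_0$ bijectively onto $\lines_i$, while for odd $i$ the roles are swapped, $\embfunc'_i$ sending $\mathrm{S}_0$ onto $\lines_i$ and $\embfunc''_i$ sending $\mathrm{L}_0$ onto $M_i$. In either parity the point set of the image is $M_i\subseteq M$ and its line set is $\lines_i\subseteq\lines$, so $\img(\embfunc_i)=(M_i,\lines_i)$ is genuinely a substructure of $\goth M$, and $\embfunc_i$ is injective because the first coordinate $i$ is fixed and the second is recovered from the image.

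For the isomorphism claim I would restrict \eqref{wz:relinc} to level $i$. Since a point and a line both of level $i$ can never satisfy the clause $i=j+1$ (that would force $0=1$ in $G$), incidence inside $(M_i,\lines_i)$ reduces exactly to the clause $i=j$, i.e.\ to $\inc_0$ read with the appropriate orientation. For even $i$ this gives $(i,a)\inc[i,l]\iff a\inc_0 l$, so $\embfunc_i$ is an isomorphism onto $\goth M_0$; for odd $i$ the point/line roles are exchanged and the disjunct $b\inc_0 a$ of \eqref{wz:relinc} is the relevant one, yielding $(i,l)\inc[i,a]\iff a\inc_0 l$, which is precisely incidence in the dual, so $\embfunc_i$ is an isomorphism onto $\goth M^\circ_0$.

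Finally, closedness. I would verify condition (1) directly: take distinct $a_1,a_2\in M_i$ lying on a common line $l=[j,y]$. Whether an incidence holds through the clause $i=j$ or through $i=j+1$ depends only on $i$ and $j$, hence is the same for both points; but the clause $i=j+1$ would force both second coordinates to equal $y$ and thus $a_1=a_2$, a contradiction. Therefore $i=j$, whence $l=[i,y]\in\lines_i=B''$. Condition (2) is entirely dual and follows by the same argument with the roles of points and lines interchanged, using that the clause $i=j+1$ again pins the two lines' second coordinates to a common value.

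The only real subtlety — and the single place where the cyclic structure of $G$ enters — is exactly the observation that the "glueing" clause $a=b$ of \eqref{wz:relinc} is injective in the relevant coordinate and therefore cannot attach two distinct objects of one level to a single object of the neighbouring level. Everything else is bookkeeping over the parity of $i$, which is why the statement is flagged as evident.
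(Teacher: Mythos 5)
Your proof is correct, and since the paper states this fact without proof (treating it as immediate from the definition of $\CyclSpacex(G,\circ,{\goth M}_0)$), your direct verification---restricting \eqref{wz:relinc} to a single level for the isomorphism claim, and using the injectivity of the glueing clause $a=b$ to rule out $i=j+1$ in both closure conditions---is exactly the routine argument the authors leave to the reader. In particular you correctly isolate the one nontrivial point, namely that two distinct objects of level $i$ incident with a common object can only be so via the clause $i=j$, which is what makes $(M_i,\lines_i)$ closed.
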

Our goal is to characterize closed substructures of $\goth M=\CyclSpacex(G,\circ,{\goth M}_0)$
using terms of the geometry of $\goth M$.
 In \cite{corset} we propose certain external definitions:
\begin{equation}\label{def:baerpoints}
 \text{ if } p=(i,p'),q=(j,q') \in M \text{, then} \quad
 p\baer q \text{ iff } i=j\land
 \pexists{l\in \lines}\bigl[p,q\inc l\bigr],
\end{equation}
\begin{equation}\label{def:extrapoint}
    \text{ if } p=(i,p')\in M, l=[j,l'] \in \lines
    \text{, then } \quad
    p \dod l \text{ iff } p\inc l\land i=j+1.
\end{equation}
Consider above definitions in $\goth M$. The formula
\begin{equation}\label{def:baerpoints1}
  \pforall{p,q \in M}\Bigl[p\baer q \iff 
  \pexists{l\in \lines} 
    \bigl[p,q\inc l \;\land\; \neg(p\dod l) \;\land\;
    \neg(q\dod l)\bigr]\Bigr].
\end{equation}
express the relation $\baer$ in the language of $({\goth M},\dod)$ (cf. \cite{corset}).
 Assume that $\goth M_0$ is connected.
Let $\Baer\;\subseteq M\times M$ be the transitive closure of the relation $\baer$.
Then, the closed substructure $\img(\embfunc_i)$ of $\goth M$ is the
equivalence class of a point $(i,a)$ of ${\goth M}$
under the relation $\Baer$.
\par
The covering, mentioned in \ref{fct:imgBaer}, can be easily recovered
if the lines size and the points degree of the underlying structure ${\goth M}_0$ are
distinct.
\newline
We write $\rank(x)$ for degree of $x$, if $x$ is a point, or
for size of $x$, if $x$ is a line.
Then, straightforward from \ref{lem:parameters} we get:
\begin{cor}\label{pr:extrapoint0}
Let ${\goth M}_0$ be a finite configuration with point degree $\kappa$ and
line size $\rho\neq \kappa$.
Then for each point $p$ and each line $l$ of $\CyclSpacex(G,\circ,{\goth M}_0)$ we have
$$p\dod l \text{ iff } p\inc l\land \rank(p)=\rank(l).$$
\end{cor}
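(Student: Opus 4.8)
The plan is a short case analysis on the cyclic component. Fix a point $p=(i,p')$ and a line $l=[j,l']$ of $\goth M=\CyclSpacex(G,\circ,{\goth M}_0)$ with $p\inc l$. By the incidence condition \eqref{wz:relinc}, incidence forces either $i=j$ or $i=j+1$, and since $G$ is a nontrivial cyclic group ($1\neq 0$ there), these two alternatives are mutually exclusive. By Definition \eqref{def:extrapoint} the relation $p\dod l$ is exactly the second alternative $i=j+1$ (together with $p\inc l$), so the whole task reduces to showing that, among incident pairs, the numerical condition $\rank(p)=\rank(l)$ singles out precisely the case $i=j+1$.

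First I would treat the case $i=j+1$, where $i$ and $j$ have opposite parity. Reading the degrees and sizes off Lemma \ref{lem:parameters}: if $i$ is even (so $j$ is odd) then $\rank(p)=\kappa+1$ and $\rank(l)=\kappa+1$, while if $i$ is odd (so $j$ is even) then $\rank(p)=\rho+1$ and $\rank(l)=\rho+1$. In both subcases $\rank(p)=\rank(l)$, which already yields the implication $p\dod l\implies\rank(p)=\rank(l)$.

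Next the case $i=j$, where $p$ and $l$ lie in the same layer $M_i,\lines_i$. Here Lemma \ref{lem:parameters} gives $\{\rank(p),\rank(l)\}=\{\kappa+1,\rho+1\}$ regardless of the parity of $i$ (the point and the line carry the two complementary values), so $\rank(p)\neq\rank(l)$ exactly because $\kappa\neq\rho$. This is the only point at which the hypothesis $\rho\neq\kappa$ is used, and it is essential: it is what forbids an incident same-layer pair from having equal rank.

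Combining the two cases, for an incident pair $\rank(p)=\rank(l)$ holds if and only if $i=j+1$, that is, if and only if $p\dod l$. I do not expect any genuine obstacle here; the only care required is the parity bookkeeping in reading off Lemma \ref{lem:parameters}, together with the observation that dropping the assumption $\rho\neq\kappa$ would render the same-layer pairs indistinguishable by rank and make the statement fail.
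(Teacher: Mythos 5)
Your proof is correct and follows essentially the same route as the paper's: the paper also derives the corollary directly from Lemma \ref{lem:parameters}, splitting incident pairs by \eqref{wz:relinc} into the cases $i=j+1$ (where the opposite parities force $\rank(p)=\rank(l)$, both $\kappa+1$ or both $\rho+1$) and $i=j$ (where $\{\rank(p),\rank(l)\}=\{\kappa+1,\rho+1\}$ and $\kappa\neq\rho$ rules out equality). Your explicit remarks on the well-definedness of parity in $G$ and the essential role of $\rho\neq\kappa$ match the paper's (terser) use of the same facts, so nothing is missing.
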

However, if the assumptions of \ref{pr:extrapoint0} are not valid, in particular
when ${\goth M}_0$ is self dual, we must use more complex methods to
restore the covering.
\newline
For $p\in M$, $l\in \lines$ we define the following relation:
\begin{multline}\label{def:addpoint}
  p \dod_1 l \iff  p\inc l \land \pexists{q\in M}
  \Bigl[q\neq p \land
  q\inc l \land \pforall{k,m,n\in \lines} \bigl[ l \neq k,m,n \land
  m\neq n \land
  \\
   q\inc k \land  p \inc m,n   
    \land k\cap m\neq \emptyset
     \implies k\cap n = \emptyset\bigr]\Bigr].
\end{multline}
\begin{prop}\label{pr:extrapoint1}
  Let $\CyclSpacex(G,\circ,{\goth M}_0)=\struct{M,\lines,\inc}$
  and ${\goth M}_0$ be Shultenian.
  Moreover, assume that ${\goth M}_0$ has no $2$-element lines, and any two
  collinear points of ${\goth M}_0$ can be completed to a triangle in ${\goth M}_0$
  (equivalently: every line of ${\goth M}_0$ is contained in a plane), and
  every two intersecting lines from ${\goth M}_0$ determine a plane.
  Then $p \dod_1 l$ iff $p\dod l$
  for every $p \in M$ and $l \in \lines$.
\end{prop}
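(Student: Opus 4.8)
The plan is to reduce everything to the incidence pattern on a single line, as described by \eqref{wz:relinc}. Fix a line $l=[j,l']$ of $\goth M$ and consider first an even level $j$, so that $l'\in\mathrm{L}_0$ (the odd case will be the exact dual). Reading off \eqref{wz:relinc}, every point incident with $l$ is either an \emph{internal} point $(j,a)$ with $a\inc_0 l'$, or the single \emph{added} point $p_\infty=(j+1,l')$; and \eqref{def:extrapoint} shows that $p\dod l$ holds precisely for $p=p_\infty$. Hence the whole statement amounts to proving that, among the points of $l$, the relation $\dod_1$ singles out exactly $p_\infty$. I would therefore establish two things: that $p_\infty\dod_1 l$, and that no internal point satisfies $\dod_1\, l$.

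For the first (easy) direction I would take as the witness $q$ in \eqref{def:addpoint} any internal point $(j,a)$ of $l$. The lines through $p_\infty$ other than $l$ are exactly the $[j+1,c]$ with $c\inc_0 l'$, and a short computation from \eqref{wz:relinc} shows that an internal line $k=[j,c']$ through $q$ meets $[j+1,c]$ iff the point $c$ lies on the line $c'$ of $\goth M_0$. If such a $k$ met two distinct lines $m=[j+1,c_1]$, $n=[j+1,c_2]$ through $p_\infty$, then $c_1,c_2$ would lie on both $l'$ and $c'$, forcing $c'=l'$ by the uniqueness axiom of a PLS, i.e.\ $k=l$, which is excluded. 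The only remaining line through $q$, the added line $[j-1,a]$, sits at levels disjoint (mod $|G|$) from the levels of the lines through $p_\infty$, so it meets none of them. Thus the implication inside \eqref{def:addpoint} holds and $p_\infty\dod_1 l$. This step uses only the PLS uniqueness axiom together with the level bookkeeping.

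The substance is the converse: for an internal $p=(j,a)$ and an \emph{arbitrary} admissible witness $q\neq p$ on $l$, I must produce lines $k\ni q$ and $m\neq n$ through $p$ (all different from $l$) with $k$ meeting both $m$ and $n$, thereby defeating \eqref{def:addpoint}. I split on $q$. If $q=(j,a')$ is internal ($a'\neq a$, both on $l'$), then translating through \eqref{wz:relinc} the task becomes: find in $\goth M_0$ a line through $a'$ meeting two distinct lines through $a$, all three different from $l'$. Here the hypotheses enter. Completing the collinear pair $a,a'$ to a triangle yields an apex $e\notin l'$ and lines $c_1=\LineOn(a,e)$, $c'=\LineOn(a',e)$ meeting at $e$; since $\goth M_0$ has no $2$-element lines, $c'$ carries a third point $f$, and the Shultenian axiom applied to the triangle $(a',e,a)$ with $f$ on $\LineOn(a',e)$ gives $a\sim f$, producing a second line $c_2=\LineOn(a,f)$ through $a$ that also meets $c'$; the uniqueness axiom guarantees $c_1\neq c_2$ and that none of $c_1,c_2,c'$ equals $l'$. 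If instead $q=p_\infty$, I would simply take $k=[j+1,a]$, which is a line through $p_\infty$ because $a\inc_0 l'$: by \eqref{wz:relinc} it meets every internal line $[j,c]$ through $p$, hence meets the two lines $[j,c_1]$, $[j,c_2]$ just constructed. In both cases \eqref{def:addpoint} fails, so $\neg(p\dod_1 l)$.

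Combining the two directions gives $p\dod_1 l\iff p=p_\infty\iff p\dod l$ for even $j$; the odd case is the exact dual and is obtained by repeating the argument with the points and lines of $\goth M_0$ interchanged---now invoking the hypothesis that every two intersecting lines determine a plane in place of triangle completion---or, equivalently, by transporting the even case through the correlation $\kor$. I expect the main obstacle to be precisely this converse direction: engineering, for \emph{every} possible witness $q$, a transversal meeting two lines through $p$. This is where all four assumptions are consumed, and the delicate point is that the two dual completion conditions (every line lies in a plane, and every two intersecting lines determine a plane) are both needed to cover even and odd levels uniformly, while the Shultenian axiom and the absence of $2$-element lines supply the extra point $f$ and the collinearity $a\sim f$ that create the crucial second transversal.
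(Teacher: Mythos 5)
Your proposal is correct and follows essentially the same route as the paper's own proof: the same split by parity of the level, the same distinction between an internal witness $q$ and the added witness $p_\infty$, and the identical converse construction of a triangle completed by a third point plus the Shult axiom (your $a',e,f$ and $c',c_1,c_2$ are exactly the paper's $q',r',s'$ and the lines $k,m,n$), with the odd case handled, as in the paper, by dualizing through the hypothesis that two intersecting lines determine a plane. The only differences are presentational --- you verify the forward direction by direct level bookkeeping where the paper invokes the neighbourhood analysis of \cite[Lemma 2.4]{corset}, and you are in fact more careful about the distinctness checks --- though your parenthetical alternative of transporting the even case through the correlation $\kor$ should not be taken as immediate, since $\kor$ carries the relation $\dod_1$ to its dual and the defining formula \eqref{def:addpoint} is not self-dual.
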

\begin{proof}
  Let $\goth M=\CyclSpacex(G,\circ,{\goth M}_0)=\struct{M,\lines,\inc}$, where
  ${\goth M}_0=\struct{\mathrm{S}_0,\mathrm{L}_0,\inc_0}$,
  and $p \in M_i\subset M$, $l \in \lines_j\subset \lines$.
\par
  Let $i$ be an even number from a cyclic group $G=C_k$ ($k>2$) with even $k$,
  or from $G={\mathbb Z}$.
 There are $p'\in \mathrm{S}_0$, $l'\in \mathrm{L}_0$ such that
 $p=(i,p')$  and $l=[j,l']$.
 Let us assume that $p\dod l$,
 in other words $p\inc l$ and $i=j+1$.
 Take a point $q\neq p$ and a line $k\neq l$ such that $q\inc l,k$.
 Next, consider a line $m$ such that $m\sqcap l=p$, $m\sqcap k=r$.
 Observe $\otocz({\goth M},p)$ (see fig.\;\ref{fig:multneighb}).
 Note that neighbourhoods of a point
 in the structure $\goth M$
 and in $\CyclSpacex(k,\varkappa,{\goth M}_0)$, considered in \cite{corset}
 (where $\varkappa$ is the involutory correlation of ${\goth M}_0$) are isomorphic
 both for $k=3$ and for an arbitrary $k>3$.
 Therefore, the line $m$ is the unique one in $\otocz({\goth M},p)$,
 which crosses any line passing through $q$, it follows from
 \cite[Lemma 2.4]{corset}.
  Consequently, $p \dod_1 l$ holds.
 \par
 Now, assume that $p \dod l$ does not hold. For $p$ not laying on $l$
 we get $\neg(p \dod_1 l)$ immediately.
 Let $\neg(p \dod l)$ and  $p\inc l$ hold, so $i=j$.
 Consider arbitrary point $q = (i,q') \neq p$ such that $q\inc l$.
 From assumptions there exists a triangle $(q',p',r')$ in ${\goth M}_0$ and
 a point $s'\neq p',r'$ on $\LineOn(q',r')$. Set
   $k = [i,\LineOn(q',r')]$, $m = [i,\LineOn(p',r')]$, and
 $n = [i,\LineOn(p',s')]$, which existence follows by the Shult axiom.
 Next, let $q = (i+1,l')\inc l$ and $k = [i+1,p']$, so $q \inc k$.
 Take any two lines $m,n\neq l$ of the form $[i,x']$ passing through $p$;
 then $k$ crosses them both (see details in \cite{corset}).
 This proves that $p \dod_1 l$ does not hold.
\par
  If $i\in G$ is odd then for $p=(i,p')$, $l=[j,l']$ we have $p'\in \mathrm{L}_0$,
  $l'\in \mathrm{S}_0$.
  If we assume that $p\dod l$ then 
 directly from analysing
 in $\CyclSpacex(G,\circ,{\goth M}_0)$
 the neighbourhood of the point $p$ we obtain that $p \dod_1 l$. 
\par 
 In order to close our proof we consider the case with $i$ odd and
 $\neg(p \dod l)$.  
 Let $q'\in \mathrm{L}_0$ be a line such that $q'$ intersects $p'$.
 Every two intersecting lines
 determine a plane in ${\goth M}_0$.  
 Hence, we get the existance of a line $r'\in \mathrm{L}_0$ such that $q',p',r'$
 are sides of one triangle. 
 Now, the claim is provided by arguments analogous to those used for
 $i$ even.
\end{proof}
\begin{figure}[!h]
    \begin{center}
    \includegraphics[scale=0.6]{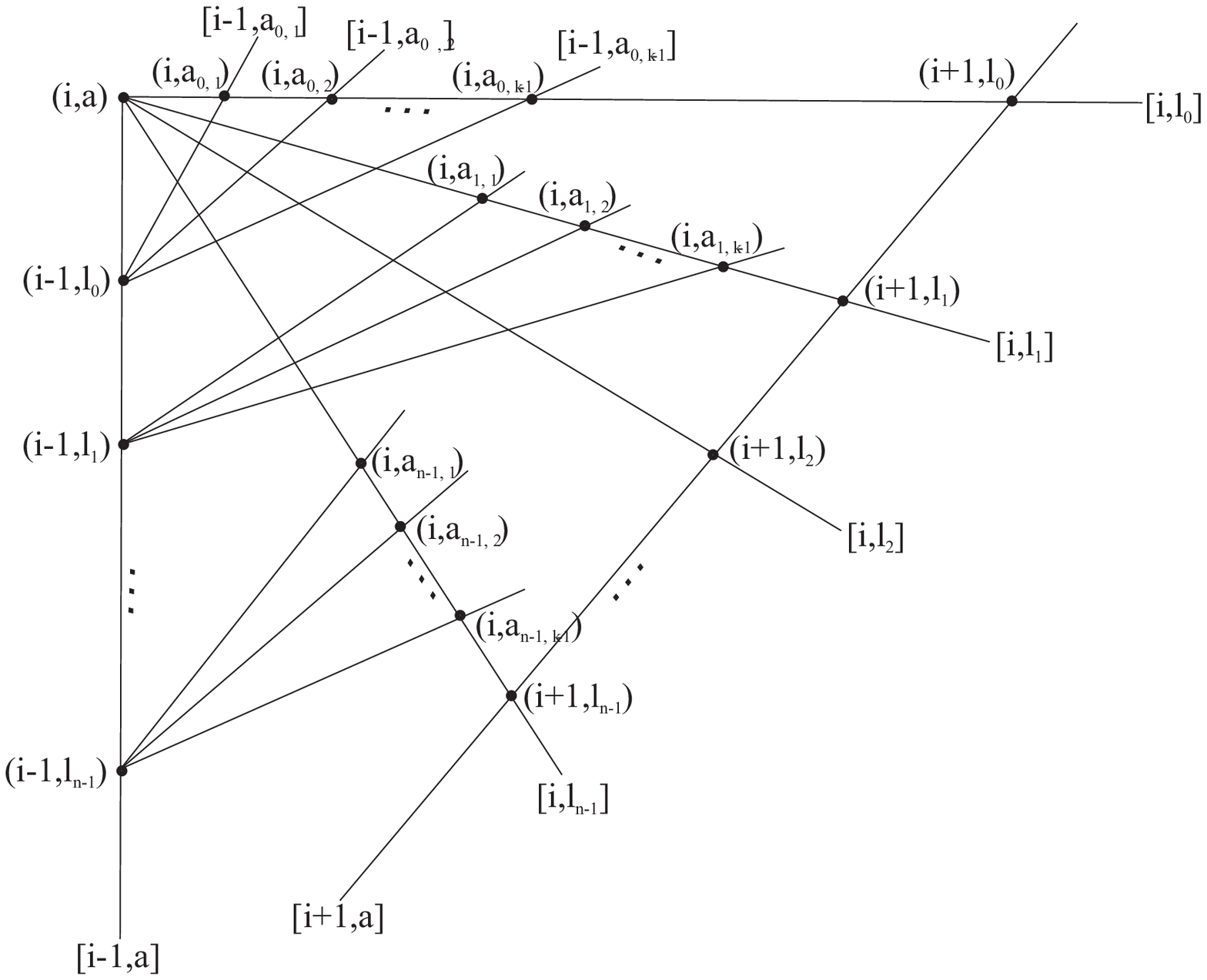}
    \end{center}
\caption{$\otocz(\goth M,{(i,a)})$ -- the neighbourhood of a point $(i,a)$ in
$\goth M=\CyclSpacex(G,\circ,{\goth M}_0)$}
\label{fig:multneighb}
\end{figure}
Let us introduce another two relations $\corr,\dod_2  \;\; \subseteq M \times \lines$ given
by the formulas:
\begin{equation}\label{def:pointcorline}
  a \corr l \quad\iff\quad \neg(a\inc l)\Land (\exists! \; m) \;
  [a \inc m \land l\sqcap m = \emptyset];
\end{equation}
\begin{equation}\label{def:extrapoint2}
 p\dod_2 l \iff \pexists{a\in M}[l\inc p\nsim a\corr l].
\end{equation}
\begin{prop}\label{pr:extrapoint2}
 Let $\CyclSpacex(G,\circ,{\goth M}_0)=\struct{M,\lines,\inc}$
 and ${\goth M}_0=\struct{\mathrm{S}_0,\mathrm{L}_0,\inc_0}$ be a partial linear space
 with constant size $>2$ of each line. Assume, that for every pair 
 $(a,l)\in \mathrm{S}_0\times \mathrm{L}_0$ such
 that $a$ is outside $l$ there is a line through $a$, which misses $l$ 
 and there is a point on $l$, which is not collinear with $a$.
 Then $p \dod_2 l$ iff $p\dod l$ for every $p \in M$ and $l \in \lines$
\end{prop}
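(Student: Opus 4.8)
The plan is to reduce the equivalence to a statement about a single distinguished point. Writing $p=(i,p')$ and $l=[j,l']$, recall from \eqref{def:extrapoint} and \eqref{wz:relinc} that $p\dod l$ holds exactly when $j=i-1$ and $p'=l'$, i.e. when $p$ is the \emph{extra} point $(j+1,l')$ that \eqref{wz:relinc} adds to $l$ on top of the points $(j,x)$ with $x\inc_0 l'$. I would single out the point $a_0:=(j-1,l')$ and show that, under the stated hypotheses, $a_0$ is the only point with $a_0\corr l$; once this is granted, \eqref{def:extrapoint2} collapses to ``$p\inc l$ and $p\nsim a_0$'', and the proposition follows by deciding this condition for each point of $l$. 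Exactly as in the proof of \ref{pr:extrapoint1}, the argument splits according to the parity of $i$, the two cases being interchanged by the shift $\delta\colon(i,x)\mapsto(i+1,x)$ realising $\CyclSpacex(G,\circ,{\goth M}_0)\cong\CyclSpacex(G,\circ,{\goth M}_0^{\circ})$; so it suffices to treat even $i$ and to dualise.

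For the implication $p\dod l\Rightarrow p\dod_2 l$ I would simply exhibit $a_0$. Every line through $a_0=(j-1,l')$ other than its extra line $[j-2,l']$ has the form $[j-1,y]$ with $y$ a point of $l'$, and such a line meets $l$ in $(j,y)$; hence $[j-2,l']$ is the unique line through $a_0$ disjoint from $l$, so $a_0\corr l$ by \eqref{def:pointcorline}. Moreover $a_0$ sits in layer $j-1$ while the extra point $p=(j+1,l')$ sits in layer $j+1$, so no line can contain both and $p\nsim a_0$; together with $p\inc l$ this yields $p\dod_2 l$.

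The converse $\neg(p\dod l)\Rightarrow\neg(p\dod_2 l)$ is where the real work lies. If $p\not\inc l$ then \eqref{def:extrapoint2} fails outright, so assume $p=(j,x)$ with $x\inc_0 l'$ is a non-extra point of $l$; I must show every $a$ with $a\corr l$ satisfies $p\collin a$. The key step is a classification of the points $a\corr l$, obtained by inspecting $\otocz({\goth M},a)$ and counting, for $a=(s,a')$, how many lines through $a$ miss $l$: a point far from layer $j$ has many missing lines, while for $s\in\{j-1,j,j+1\}$ the count is governed by the local position of $(a',l')$ in ${\goth M}_0$. Here hypothesis (a) (a line through the external point $a'$ missing $l'$) eliminates the candidates in layer $j$, the constant line size $>2$ eliminates the candidates in layer $j+1$ whose extra line already meets $l$ (their number of missing lines is one less than a line size, hence at least two), and condition (b) plays the dual role under the opposite parity; what survives is exactly $a_0$. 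Finally $a_0\collin p$ through the line $[j-1,x]$, which carries $a_0$ (since $x\inc_0 l'$) and has $p=(j,x)$ as its extra point, so $p\dod_2 l$ is false. The main obstacle is precisely this classification --- excluding the spurious $\corr$-candidates in the layers adjacent to $l$ --- and it is exactly this that forces the size condition together with (a) and (b), in the dual guises dictated by the two parities, into the proof.
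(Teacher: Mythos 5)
Your overall skeleton is exactly the mechanism behind the paper's (much terser) proof: the paper's direct implication likewise amounts to producing the distinguished point $a_0=(j-1,l')$ via a neighbourhood count, and its converse is dismissed with ``the converse reasoning''. Your forward direction is correct and cleaner than the paper's: the lines $[j-1,y]$ with $y$ incident with $l'$ all meet $l$ in the extra points $(j,y)$, so $[j-2,l']$ is indeed the unique line through $a_0$ missing $l$, and the layer gap between $j-1$ and $j+1$ gives $p\nsim a_0$ (this uses $|G|\geq 4$, which the construction guarantees). Your even-$j$ classification of the points $a$ with $a\corr l$ is also correct: hypothesis (a) plus the always-missing extra line $[j-1,a']$ eliminates layer $j$, the count $\rho-1\geq 2$ eliminates layer $j+1$, the count $1+(\rho-1)\geq 3$ eliminates layer $j-1$ except for $a_0$ itself, and remote layers are trivial; and every non-extra point $(j,x)$ of $l$ is joined to $a_0$ by $[j-1,x]$.

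The genuine gap is the reduction of the odd parity to the even one ``by dualising''. The shift realises $\CyclSpacex(G,\circ,{\goth M}_0)\cong\CyclSpacex(G,\circ,{\goth M}_0^{\circ})$, so dualising means applying the even-parity argument to ${\goth M}_0^{\circ}$ --- but the hypothesis package is not self-dual: conditions (a) and (b) do swap into each other, while ``constant line size $>2$'' becomes ``constant point degree $>2$'' for ${\goth M}_0$, which is nowhere assumed (the PLS axioms only give degree $\geq 2$). Concretely, for odd $j$ the layer-$(j+1)$ candidates are $a=(j+1,a')$ with $a'\in \mathrm{S}_0$ collinear with $l'$, and the number of lines through $a$ missing $l$ is $\kappa_{a'}-1$, one less than the \emph{degree} of $a'$ in ${\goth M}_0$, not one less than a line size. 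If $\kappa_{a'}=2$ then $a\corr l$, and any non-extra point $p=(j,z)$ of $l$ with $z$ a line through $l'$ avoiding $a'$ satisfies $p\inc l$, $p\nsim a$, hence $p\dod_2 l$ while $\neg(p\dod l)$. This is not a repairable slip in your write-up: take ${\goth M}_0$ to be the $3\times 3$ grid (nine points, the three rows and three columns as lines) and $G=\mathbb{Z}$; this ${\goth M}_0$ has constant line size $3>2$ and satisfies (a) and (b) for every anti-flag, yet with $j$ odd, $l'=(1,1)$, $a'=(1,2)$, $a=(j+1,a')$ and $p=(j,C_1)$ one checks directly that exactly one line through $a$ (namely $[j+1,C_2]$) misses $l$, so $p\dod_2 l$ holds although $p$ is not the extra point of $l$. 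So the statement itself fails at that parity under the stated hypotheses --- a defect your classification exposes but the paper's sketched ``converse reasoning'' hides --- and both proofs need the additional dual assumption of constant point degree $>2$ (satisfied by all the paper's intended examples, e.g. $\gras(m,X)$ with $m\leq |X|-3$, where the degree is $|X|-m\geq 3$, and $HT(q)$, where it is $q\geq 3$). With that hypothesis added and the odd-parity counting carried out with degrees in place of sizes, your argument closes correctly.
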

\begin{proof}
Let $\goth M=\CyclSpacex(G,\circ,{\goth M}_0)=\struct{M,\lines,\inc}$.
\par
Let us take a point $p=(i,p')\in M$ and a line $l=[j,l']\in \lines$ such that
$p\dod l$. Then $p\inc l$ and $i=j+1$. Let us consider
the neighbourhood of a point $a$ such that $a\corr l$. Since $a\corr l$, then
$\rank(a)-1$ is the size of $l$ in $\otocz(\goth M,a)$.
From assumptions, for every line $m=[j-1,m']\in \lines$, which does not pass through $a$,
there is a line from $\img(\embfunc''_{j-1})$ through $a$, which misses $m$. 
Thus, we have at least two lines through $a$, which do not cross $m$ in $\otocz(\goth M,a)$.
In $\otocz(\goth M,a)$ size of every line from $\img(\embfunc''_{j-2})$, 
which does not pass through $a$, equals $2$.
Hence, in $\otocz(\goth M,a)$ size of every line distinct from $l$,
which does not pass through $a$ is less than
$\rank(a)-1$.
Therefore, the line $l$ is uniquely determined by its relation to $a$.
The point $p$ is also the unique one on the line $l$, which is not
collinear with $a$. Consequently, we conclude that $p\dod_2 l$.
\par
The converse implication can be proved by applying the converse reasoning.
\end{proof}
Propositions \ref{pr:extrapoint0}, \ref{pr:extrapoint1} and \ref{pr:extrapoint2} together with \ref{fct:imgBaer}
yield the following:
\begin{cor}\label{pr:corneb:inf}
  Let ${\goth M}_0$ be connected.
  Under assumptions of each of \ref{pr:extrapoint0}, \ref{pr:extrapoint1} and \ref{pr:extrapoint2}
  the covering of $\CyclSpacex(G,\circ,{\goth M}_0)$ by the family of closed 
  substructures $\{\img(\embfunc_i):i \in G \}$ is definable in 
  $\CyclSpacex(G,\circ,{\goth M}_0)$. 
 Consequently it is preserved by all 
  automorphisms of $\CyclSpacex(G,\circ,{\goth M}_0)$.  
\end{cor}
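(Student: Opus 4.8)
The plan is to chain the three preceding results into one geometric definition of the covering, and then invoke the fact, recorded after \ref{fct:imgBaer}, that the members of the covering are $\Baer$-classes. The crucial first observation is that each of \ref{pr:extrapoint0}, \ref{pr:extrapoint1} and \ref{pr:extrapoint2} supplies, under its own hypotheses, a definition of the relation $\dod$ purely in terms of the geometry $\struct{M,\lines,\inc}$: through the equality $\rank(p)=\rank(l)$ in \ref{pr:extrapoint0} (both point degree and line size are geometric invariants), through the formula \eqref{def:addpoint} for $\dod_1$ in \ref{pr:extrapoint1}, and through \eqref{def:pointcorline}--\eqref{def:extrapoint2} for $\corr$ and $\dod_2$ in \ref{pr:extrapoint2}. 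Hence, whichever of the three hypothesis sets is assumed, $\dod$ is expressible in the language of $\goth M$ alone.

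Once $\dod$ is available geometrically, I would substitute it into \eqref{def:baerpoints1}, which expresses $\baer$ in the language of $(\goth M,\dod)$; so $\baer$ too is definable in $\goth M$. Next I would pass to its transitive closure $\Baer$. By connectedness of ${\goth M}_0$ together with the statement recorded after \ref{fct:imgBaer} --- that $\img(\embfunc_i)$ is exactly the $\Baer$-class of any of its points $(i,a)$ --- the $\Baer$-classes are precisely the point sets $B'_i$ of the closed substructures forming the covering. It remains to recover the line sets: by closure condition~(1) a line lies in $B''_i$ as soon as it carries two distinct points of $B'_i$, while conversely every line of the slice carries at least two points of $B'_i$ (in a PLS each line has at least two points and each point lies on at least two lines, which covers both parities of $i$). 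Thus $B''_i=\{l\in\lines : \exists\,p,q\in B'_i,\ p\neq q,\ p,q\inc l\}$, so each substructure $\img(\embfunc_i)=(B'_i,B''_i)$ is determined by $B'_i$, and the covering $\{\img(\embfunc_i):i\in G\}$ is definable in $\goth M$.

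The invariance under $\Aut(\goth M)$ then follows formally. An automorphism preserves $\inc$ and the invariant $\rank$, hence preserves $\dod$, hence $\baer$; and, being an automorphism, it preserves the transitive closure $\Baer$ of the invariant relation $\baer$. Therefore it permutes the $\Baer$-classes $B'_i$, and since each $B''_i$ is read off from $B'_i$, it permutes the members $\img(\embfunc_i)$ of the covering.

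I expect the transitive-closure step to be the one delicate point. Transitive closure is not first-order, so the word ``definable'' must here be read in the geometric (second-order) sense that connectedness of ${\goth M}_0$ makes available; for the automorphism conclusion this is harmless, since the transitive closure of any automorphism-invariant relation is automatically automorphism-invariant. A secondary matter needing care is the uniform handling of the three hypothesis sets: each is used only to secure the geometric definability of $\dod$, after which the argument is identical, so ``under assumptions of each of'' is to be understood disjunctively --- any single one suffices.
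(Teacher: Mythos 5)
Your proposal is correct and takes essentially the same route the paper intends (the corollary is stated without an explicit proof, as a direct consequence): each of \ref{pr:extrapoint0}, \ref{pr:extrapoint1} and \ref{pr:extrapoint2} makes $\dod$ definable in the incidence geometry, formula \eqref{def:baerpoints1} then defines $\baer$, and by the remark following \ref{fct:imgBaer} the $\Baer$-classes of the connected structure recover the covering, whence automorphism-invariance. Your two supplementary observations --- recovering each $B''_i$ from $B'_i$ via the closure conditions and the minimal line size/point degree of a PLS, and reading ``definable'' broadly enough to accommodate the (automorphism-invariant but not first-order) transitive closure --- merely make explicit points the paper leaves tacit.
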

There are some significant examples of structures  $\CyclSpacex(G,\circ,{\goth M}_0)$, of which automorphisms
preserve their closed substructures.
Proposition \ref{pr:extrapoint0} provides a wide class of such examples. For instance we
can adopt any finite affine, or slit space (cf. \cite{karzel1}, \cite{karzel2})
as a structure ${\goth M}_0$ we start from.
Every projective plane satisfies assumptions of \ref{pr:extrapoint1}. Thus, the family of $\CyclSpacex(G,\circ,{\goth M}_0)$, where
${\goth M}_0$ is a projective plane, is the next class of examples.
\par
Let $X$ be an arbitrary set. We write $\sub_m(X)$ for the set of $m$-element subsets of $X$.
Let $m$ be an integer with $1\leq m < |X|$. Then $\gras(m,X)$ is the incidence 
structure 
$\gras(m,X) = \struct{\sub_m(X),\sub_{m+1}(X),\subset}$. We call this structure 
{\em combinatorial Grassmannian}.
It is worth to note that $\gras(2,5)=\goth D$ is the Desargues configuration.
\begin{fact}
If $m\neq n-1,n-2$, then the structure $\gras(m,X)$ satisfies assumptions of \ref{pr:extrapoint2}.
\end{fact}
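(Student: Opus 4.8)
The plan is to set $n=|X|$ and verify the three requirements of Proposition~\ref{pr:extrapoint2} directly from the combinatorics of subsets, tracking where the excluded values $m=n-1,n-2$ obstruct the argument. Throughout, a point is an $m$-set $a\in\sub_m(X)$, a line is an $(m+1)$-set $l\in\sub_{m+1}(X)$, and $a\inc l$ means $a\subset l$. First I would record the elementary facts. Two distinct points $a,b$ lie on a common line iff $|a\cup b|=m+1$, and then $a\cup b$ is the unique such line; hence $\gras(m,X)$ satisfies the uniqueness axiom, and for distinct $m$-sets one has $a\sim b$ iff $|a\cap b|=m-1$. Each line carries exactly $m+1$ points, a constant exceeding $2$ precisely when $m\ge2$, and each point lies on the $n-m$ lines $a\cup\{x\}$, $x\in X\setminus a$, so the point degree is at least $2$ iff $m\le n-2$. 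This already forces the exclusion of $m=n-1$: there the degree drops to $1$ and $\gras(m,X)$ is not even a PLS. Finally I would note the criterion for missing: distinct lines $M,l$ meet iff $|M\cap l|=m$ (a common point is an $m$-subset of $M\cap l$), so $M\sqcap l=\emptyset$ iff $|M\cap l|\le m-1$.

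Next I fix a point $a$ with $a\not\subset l$ and put $t=|a\cap l|$; since $a\not\subset l$ we have $t\le m-1$. For the first requirement — a line through $a$ missing $l$ — I take $M=a\cup\{x\}$ with $x\in X\setminus a$, noting $M\ne l$ automatically, and $|M\cap l|=t$ if $x\notin l$ while $|M\cap l|=t+1$ if $x\in l$. When $t\le m-2$, every choice of $x$ yields $|M\cap l|\le m-1$, so $M$ misses $l$. The tight case is $t=m-1$: here I must pick $x\notin l$, i.e.\ $x\in X\setminus(a\cup l)$, and since $|a\cup l|=m+(m+1)-(m-1)=m+2$ such an $x$ exists iff $n\ge m+3$, that is $m\le n-3$. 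This is exactly where $m=n-2$ breaks down: then $|a\cup l|=n$ leaves no extending element, and concretely any two distinct $(n-1)$-sets meet (they share $n-2=m$ elements), so no line through $a$ can miss $l$. I would also check that pairs with $t=m-1$ and $a\not\subset l$ genuinely occur, so that the bound $m\le n-3$ is necessary and not merely sufficient.

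For the second requirement — a point on $l$ not collinear with $a$ — I seek an $m$-subset $b=l\setminus\{y\}$ of $l$ with $|a\cap b|\le m-2$; as $b\subset l$ while $a\not\subset l$, automatically $b\ne a$, and $|a\cap b|\le m-2<m-1$ then gives $a\not\sim b$. Since $y\in l$, one computes $|a\cap b|=t-1$ if $y\in a$ and $|a\cap b|=t$ if $y\notin a$. If $t\ge1$ I choose $y\in a\cap l$, obtaining $|a\cap b|=t-1\le m-2$; if $t=0$ any $y\in l$ gives $|a\cap b|=0\le m-2$ using $m\ge2$. Thus requirement (b) holds for every $m\ge2$ and, unlike (a), imposes no upper bound on $m$.

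Assembling these, all three assumptions of~\ref{pr:extrapoint2} hold precisely when $m\ge2$ and $m\le n-3$, i.e.\ for $m\notin\{n-1,n-2\}$ (with $m\ge2$ understood from the line-size condition). The step I expect to require the most care, and would present most carefully, is the counting behind the first requirement: verifying both that the worst case $|a\cap l|=m-1$ is realizable and that its resolution hinges on the single identity $|a\cup l|=m+2$, which pins the obstruction exactly at $m=n-2$. Everything else reduces to routine cardinality bookkeeping.
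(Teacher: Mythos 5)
Your proof is correct and follows the same basic route as the paper's: verify the two assumptions of \ref{pr:extrapoint2} by exhibiting a missing line of the form $M=a\cup\{x\}$ and a non-collinear point $b\subset l$ on $l$, reducing everything to the counts $|M\cap l|\le m-1$ (lines of $\gras(m,X)$ meet iff they share $m$ elements) and $|a\cup b|>m+1$. Where you differ is in the case split, and yours is in fact the more careful one. The paper splits only on $a\cap l=\emptyset$ versus $a\cap l\neq\emptyset$ and, in the second case, always picks $x\notin a\cup l$, asserting its existence ``from assumptions''; but with $t=|a\cap l|$ one has $|a\cup l|=2m+1-t$, which can equal $n$ even when $m\le n-3$ (take $n=6$, $m=3$, $a=\{1,2,3\}$, $l=\{3,4,5,6\}$), so that choice of $x$ is not always available. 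Your split at $t\le m-2$ versus $t=m-1$ shows that an element outside $a\cup l$ is needed only in the tight case $t=m-1$, where $|a\cup l|=m+2\le n-1$ does guarantee it, while in all other cases any $x\in X\setminus a$ already yields $|M\cap l|\le m-1$; so your argument quietly repairs this small oversight in the paper's own proof, and your observation that for $m=n-2$ any two lines meet pins the excluded value exactly. You are also right to flag $m\ge 2$ as a tacit extra hypothesis (the line size is $m+1$, and the $t=0$ computation $|a\cup b|=2m>m+1$ needs it); the paper's proof uses it implicitly as well, e.g.\ in claiming $|M\cap l|=1<m$ in its first case.
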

\begin{proof}
Consider a point $a$ and a line $L$
  of $\gras(m,X)$, such that $a$ is outside $L$. If $a\cap L = \emptyset$ then we set
  $M:= a\cup \{x\}$ with $x\in L$, and $b:=L\setminus \{x\}$. 
  \newline
  Let $a\cap L \neq \emptyset$. From assumptions there exists $x$ such that
  $x\notin a\cup L$. Then we set $M:= a\cup \{x\}$.
  There exist two $y,y'\in L$ such that $y,y'\notin a$, since $|a\cap L|\leq m-1$.
  Take $b\subset L$ with $y,y'\in b$.
  \newline
  In both above cases $|M\cap L|<m$ and $|b\cup a|>m+1$. 
\end{proof}
That way we obtain the second class of examples -- structures
$\CyclSpacex(G,\circ,{\gras(m,X)})$.
\par
Let ${\goth F} = \struct{F,+,\cdot,0,1}$ be a finite field, ${\goth F} = GF(q)$.
Assume that $2\nmid q$.
We construct the {\em Havlicek-Tietze configuration} $HT(q)$ as follows:
\begin{itemize}\def\labelitemi{--}\itemsep-2pt
\item
  Its point universe is $X = F\times F$, with elements of $X$ written as $(a,b)$,
  $a,b\in F$.
\item
  Its blocks are pairs $[\alpha,\beta]$ with $\alpha,\beta\in F$, we write ${\cal G}$
  for the set of such blocks.
\item
  The incidence relation is defined by 
  $(a,b) \inc [\alpha,\beta]$ iff $a\cdot\alpha = b + \beta$.
\end{itemize}
Then $HT(q) := \struct{X,{\cal G},\inc}$.
Note that $HT(3)$ is the Pappus configuration.
In \cite{corset} we proved the following:
\begin{fact}
 Let ${\goth A}$ be an affine plane over $\goth F$   
  and $\cal D$ be the direction of a line $l$ of $\goth A$.
  Then 
  $HT(q)$ results from $\goth A$ by deleting the lines in $\cal D$.
  In particular, $HT(q)$ is a partial linear space with parallelism.
  Conversely, the family 
  $\{ c \colon  a = c \text{ or }  a = b \text{ or } 
   c \text{ is not collinear with }  a, b\}$, where $a$, $ b$
  are not collinear points of $HT(q)$, is the set $\cal D$.
  \end{fact}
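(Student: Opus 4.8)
The plan is to make the geometric picture behind $HT(q)$ explicit and then read off each assertion. First I would realise the affine plane ${\goth A}$ on the point set $F\times F$, writing a point as $(a,b)$, and identify this with the point universe $X$ of $HT(q)$ by the identity map. The incidence rule $(a,b)\inc[\alpha,\beta]\iff a\alpha=b+\beta$ says exactly that the block $[\alpha,\beta]$ is the solution set $\{(a,b):b=\alpha a-\beta\}$, i.e.\ the affine line of slope $\alpha$ and intercept $-\beta$. I would then check that $(\alpha,\beta)\mapsto(\text{slope},\text{intercept})$ is a bijection of $F\times F$ onto the set of all non-vertical lines of ${\goth A}$, so the blocks of $HT(q)$ are precisely the non-vertical lines, each arising once; the lines missing from the list are exactly the $q$ vertical lines $\{a_0\}\times F$ ($a_0\in F$), which form one parallel class, namely the direction $\cal D$ of the line $l\colon a=0$. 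This is the first assertion: $HT(q)$ is ${\goth A}$ with the lines of $\cal D$ removed.

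The partial linear space and parallelism claims then follow by inheritance from ${\goth A}$. Two distinct points lie on exactly one line of ${\goth A}$; after deleting $\cal D$ they lie on that line when it is non-vertical and on no block otherwise, so two points share at most one block and $HT(q)$ is a partial linear space. As $2\nmid q$ forces $q\ge 3$, every block carries $q\ge 2$ points and every point lies on $q\ge 2$ blocks, so the size conditions of the definition hold. Finally the $q$ surviving parallel classes of ${\goth A}$ each still partition $F\times F$ into mutually non-intersecting lines, which is the parallelism on $HT(q)$.

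The converse is the only step needing a computation, and it rests on a collinearity criterion I would isolate first: points $(a_1,b_1),(a_2,b_2)$ are collinear in $HT(q)$ iff $a_1\neq a_2$. This is seen by solving $a_i\alpha=b_i+\beta$ for $i=1,2$: subtraction gives $(a_1-a_2)\alpha=b_1-b_2$, uniquely solvable for $\alpha$ and then $\beta$ exactly when $a_1\neq a_2$, whereas $a_1=a_2$ with $b_1\neq b_2$ leaves the system inconsistent. If now $a,b$ are non-collinear they share their first coordinate, say $a=(a_0,b_1)$ and $b=(a_0,b_2)$ with $b_1\neq b_2$; by the criterion a point $c=(c_0,c_1)$ is collinear with neither $a$ nor $b$ precisely when $c_0=a_0$. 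Hence the displayed family, consisting of $a$, of $b$, and of all points collinear with neither, is exactly $\{a_0\}\times F$, the deleted line of $\cal D$ through $a$ and $b$; letting the non-collinear pair range over all first coordinates then recovers every line of $\cal D$.

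I do not expect a genuine obstacle: once the slope–intercept dictionary is fixed the whole argument is elementary linear algebra over ${\goth F}$. The only points demanding care are bookkeeping — confirming that $(\alpha,\beta)\mapsto(\text{slope},\text{intercept})$ meets every non-vertical line exactly once, so that the deleted lines constitute a single direction, and reading the final clause so that each non-collinear pair produces one deleted line while the totality of these sets is $\cal D$.
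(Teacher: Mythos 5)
Your proposal is correct, and in fact the paper itself offers no proof of this statement at all: it is quoted with the remark ``In \cite{corset} we proved the following,'' so the argument is delegated to the earlier paper. Your slope--intercept dictionary is the natural way to fill that gap, and every step checks out: $[\alpha,\beta]$ is the graph $\{(a,b): b=a\alpha-\beta\}$, the map $(\alpha,\beta)\mapsto(\alpha,-\beta)$ bijects the blocks onto the non-vertical lines of $\goth A$, so exactly the single parallel class of vertical lines $\{a_0\}\times F$ is deleted; the PLS and parallelism properties are inherited (with $q\geq 3$ guaranteeing the degree conditions of the paper's definition of PLS); and your collinearity criterion $(a_1,b_1)\sim(a_2,b_2)\iff a_1\neq a_2$, obtained by solving the linear system $(a_1-a_2)\alpha=b_1-b_2$, immediately yields that for a non-collinear pair $a=(a_0,b_1)$, $b=(a_0,b_2)$ the displayed family is the deleted vertical line $\{a_0\}\times F$. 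One small reading issue you handled sensibly: the clause ``$a=c$ or $a=b$'' in the statement is evidently a typo for ``$a=c$ or $b=c$'' (the family must contain both $a$ and $b$), and your interpretation $\{a,b\}\cup\{c: c\not\sim a,b\}$ is the intended one; note also that a single pair $(a,b)$ recovers one \emph{line} of $\cal D$, so strictly speaking the set $\cal D$ is obtained as the pair varies, exactly as you say in your final paragraph.
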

  Now, it is seen that the assumptions of \ref{pr:extrapoint2} hold for $HT(q)$.
 Hence, another set of examples in question is
 $\CyclSpacex(G,\circ,{HT(q)})$.


\section{Synthetic characterization}

Let $B = \{B_i=\struct{B'_i,B''_i}: i\in I\}$ be a family
of connected closed substructures of a connected partial linear space
${\goth M}=\struct{M,\lines,\inc}$ such that
$\bigcup_{i\in I}B'_i=M$, $\bigcup_{i\in I}B''_i=\lines$.
Let us introduce the following conditions
\begin{enumerate}[(1)]
\item
$\pforall{i_1,i_2\in I} [B'_{i_1} \cap B'_{i_2}\neq \emptyset
\implies B'_{i_1}= B'_{i_2}]$ \label{ax:emporeq}
\item
$\pforall{i_1,i_2\in I} [B''_{i_1} \cap B''_{i_2}\neq \emptyset
\implies B''_{i_1}=B''_{i_2}]$ \label{ax:dualemporeq}
\item
$\pforall{d\in B'_i}\pexists{m} [d\inc m \land m\notin B''_i]$
\label{ax:onetoall}
\item
$\pforall{m\in B''_i}\pexists{d} [d\inc m \land d\notin B'_i]$
\label{ax:dualonetoall}
\item
$\pforall{i\in I}[\struct{M\setminus B'_i,\lines\setminus B''_i}
\text{ is a closed substructure of }\struct{M,\lines}]$ \label{ax:antybaer}
\item
$\pforall{m_1,m_2,m_3}\bigl[p\inc m_1,m_2,m_3\land d_1\inc m_1\land d_2\inc m_2
\land d_3\inc m_3\land d_1,d_2,d_3\notin B'_i
\implies \pexists{n}[d_1,d_2,d_3\inc n
\vee m_1\notin B''_i \vee m_2\notin B''_i
\vee m_3\notin B''_i]\bigr]$ \label{ax:cor3}
\item
$\pforall{d_1,d_2,d_3}\bigl[d_1,d_2,d_3\inc n\land d_1\inc m_1\land d_2\inc m_2
\land d_3\inc m_3\land m_1,m_2,m_3\notin B''_i
\implies \pexists{p}[p\inc m_1,m_2,m_3
\vee d_1\notin B'_i \vee d_2\notin B'_i
\vee d_3\notin B'_i]\bigr]$ \label{ax:dualcor3}
\end{enumerate}
Note, that conditions in the following pairs: \eqref{ax:emporeq} and \eqref{ax:dualemporeq}, 
\eqref{ax:onetoall} and \eqref{ax:dualonetoall},  \eqref{ax:cor3} and \eqref{ax:dualcor3},
 are mutually dual.
\par
\noindent
Now, let us define a new relation $\baero\;\; \subseteq I\times I$, namely:
\begin{equation}\label{def:przechod}
  j\baero i  \iff \pexists{l\in B''_j}\pexists{a\in B'_i}
  [a\inc l \land j\neq i].
\end{equation}
Then, as an immediate consequence of conditions \eqref{ax:onetoall} and
\eqref{ax:dualonetoall}, we obtain
\begin{cor}\label{cor:surjectivity}
For every $i\in I$ there exists $j\in I$  and
for every $j\in I$ there exists $i\in I$ such that $i\baero j$.
\end{cor}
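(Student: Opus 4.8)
The plan is to unwind the definition \eqref{def:przechod} of the relation $\baero$ and to verify the two asserted statements separately; since they are exactly dual to one another, it suffices to give the argument for one and dualise. Recall that $i\baero j$ holds precisely when there is a line $l\in B''_i$ and a point $a\in B'_j$ with $a\inc l$ and $i\neq j$. Before either half I would record once, as a standing remark, that every member $B_i$ of the family carries at least one line and at least one point (being a nonempty connected substructure), so that the starting objects chosen below actually exist.

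For the first claim, that every $i\in I$ satisfies $i\baero j$ for some $j$, I would start from a line $l\in B''_i$. Applying \eqref{ax:dualonetoall} to $l$ yields a point $a$ with $a\inc l$ and $a\notin B'_i$. Because the point sets cover $M$, i.e. $\bigcup_{i\in I}B'_i=M$, there is an index $j$ with $a\in B'_j$; and $j\neq i$ is forced, since otherwise $a\in B'_i$ would contradict the choice of $a$. The triple $(l,a,j)$ then witnesses $i\baero j$.

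For the second claim, that every $j\in I$ is of the form $i\baero j$ for some $i$, I would dualise the above: fix a point $a\in B'_j$ and apply \eqref{ax:onetoall} to obtain a line $m$ with $a\inc m$ and $m\notin B''_j$. Since $\bigcup_{i\in I}B''_i=\lines$, some index $i$ satisfies $m\in B''_i$, and again $m\notin B''_j$ forces $i\neq j$. Now $m\in B''_i$, $a\in B'_j$, $a\inc m$ and $i\neq j$ give exactly $i\baero j$.

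There is essentially no serious obstacle here, which matches the status of the statement as an immediate corollary; the content is entirely in tracking the direction of the relation correctly. The only points demanding care are bookkeeping: selecting the right member of the dual pair \eqref{ax:onetoall}/\eqref{ax:dualonetoall} for each half (one half starts from a line in $B''_i$, the other from a point in $B'_j$), and observing that the index inequality $i\neq j$ is not an additional hypothesis to be checked but drops out automatically, because the witness $a$ (respectively $m$) was produced lying outside the relevant set $B'_i$ (respectively $B''_j$).
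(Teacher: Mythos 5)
Your proof is correct and coincides with the paper's intended argument: the corollary is stated there without proof, as an immediate consequence of conditions \eqref{ax:onetoall} and \eqref{ax:dualonetoall}, and your two dual halves make exactly that explicit, using the covering hypotheses $\bigcup_{i\in I}B'_i=M$ and $\bigcup_{i\in I}B''_i=\lines$ to locate the second index and observing that $i\neq j$ drops out because the witness lies outside $B'_i$ (resp.\ $B''_j$). Nothing further is needed.
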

\noindent
Let us pay some more attention to the properties of $B$, which follow
directly from the introduced conditions.
\begin{lem}\label{lem:cor2}
The following statement holds:
\begin{enumerate}[]
\item
$\pforall{m_1,m_2}\bigl[p\inc m_1,m_2\land d_1\inc m_1\land d_2\inc m_2
\land m_1\in B''_i \land m_2\in B''_i \land d_1,d_2 \notin B'_i
\implies \pexists{n}[d_1,d_2\inc n ]\bigr].$
\end{enumerate}
Furthermore, the dual version of this statement is also valid.
\end{lem}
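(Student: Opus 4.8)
The plan is to obtain Lemma~\ref{lem:cor2} as a degenerate instance of condition~\eqref{ax:cor3}. That condition is the "three-line" statement, whereas the Lemma is its "two-line" specialization; the natural device is therefore to feed a repeated line and point into~\eqref{ax:cor3}, so that the third branch collapses onto the first.

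Concretely, I would assume the hypotheses of the Lemma: $p\inc m_1,m_2$, $d_1\inc m_1$, $d_2\inc m_2$, $m_1,m_2\in B''_i$ and $d_1,d_2\notin B'_i$. I then instantiate the universally quantified data of~\eqref{ax:cor3} by keeping $p,d_1,d_2,m_1,m_2$ as given and setting $m_3:=m_1$ and $d_3:=d_1$. Each hypothesis of~\eqref{ax:cor3} is then satisfied: $p\inc m_1,m_2,m_3$ reduces to $p\inc m_1,m_2$; $d_3\inc m_3$ reduces to $d_1\inc m_1$; and $d_1,d_2,d_3\notin B'_i$ reduces to $d_1,d_2\notin B'_i$.

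Under this substitution the conclusion of~\eqref{ax:cor3} reads: there is $n$ with $d_1,d_2,d_1\inc n$, or one of $m_1,m_2,m_1$ lies outside $B''_i$. The three "escape" disjuncts now all assert a membership failure for $m_1$ or $m_2$ in $B''_i$, which the hypothesis $m_1,m_2\in B''_i$ forbids. Hence the surviving disjunct is $d_1,d_2\inc n$, which is exactly the asserted conclusion. For the dual statement I would run the identical substitution $m_3:=m_1$, $d_3:=d_1$ inside the dual condition~\eqref{ax:dualcor3}; there the escape disjuncts become membership failures of $d_1$ or $d_2$ in $B'_i$, excluded by the dual hypotheses, leaving a common point $p$ of $m_1,m_2$.

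There is essentially no hard step here; the only point demanding care --- and the nearest thing to an obstacle --- is the bookkeeping of the escape clauses. One must verify that after identifying the third line with the first ($m_3=m_1$) all three membership disjuncts collapse to conditions already excluded by $m_1,m_2\in B''_i$, so that the conclusion is forced without having to manufacture any genuine third line (which, lacking a guarantee that $p$ has degree $\ge 3$ inside $B_i$, one could not produce from~\eqref{ax:onetoall} anyway).
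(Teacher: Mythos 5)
Your proposal is correct and coincides with the paper's own proof: the paper also obtains the Lemma by collapsing two of the three lines and points in conditions~\eqref{ax:cor3} and \eqref{ax:dualcor3} (it identifies $m_2=m_3$, $d_2=d_3$ where you identify $m_3=m_1$, $d_3=d_1$, an immaterial relabeling). Your extra bookkeeping of the escape disjuncts is sound and merely makes explicit what the paper leaves to the reader.
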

\begin{proof}
It suffices to adopt $m_2=m_3$ and $d_2=d_3$ in conditions
\eqref{ax:cor3}, \eqref{ax:dualcor3}.
\end{proof}
\begin{lem}\label{lem:function}
Let $d_1\inc m_1$, $d_2\inc m_2$, $m_1\sqcap m_2\neq\emptyset$ and
$m_1, m_2 \in B''_j$.
If $d_1\in B'_i$ for some $i\neq j$ and $d_2\notin B'_j$, then
$d_2\in B'_i$.
Moreover, the dual version of this statement is true as well.
\end{lem}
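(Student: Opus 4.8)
The plan is to reduce the statement to the picture of ``extra points'' and then transport class-membership along the line joining $d_1$ and $d_2$. First I would set $p := m_1\sqcap m_2$; since $m_1,m_2\in B''_j$ are distinct and meet at $p$, the second closedness condition applied to $B_j$ gives $p\in B'_j$. Condition \eqref{ax:emporeq} says that distinct classes have disjoint point sets, so from $i\neq j$ we get $B'_i\cap B'_j=\emptyset$ and in particular $d_1\notin B'_j$; together with the hypothesis $d_2\notin B'_j$ this shows that $d_1$ and $d_2$ are points of $m_1,m_2$ lying outside $B'_j$.

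Next I would record the two uniqueness facts that drive the argument, both immediate consequences of the axioms. By the second closedness condition for $B_j$, a point outside $B'_j$ lies on at most one line of $B''_j$. On the other hand, by \eqref{ax:dualonetoall} every line of $B''_j$ carries at least one point outside $B'_j$, and by \eqref{ax:antybaer} (closedness of the complement $\struct{M\setminus B'_j,\lines\setminus B''_j}$) via its first closedness condition it carries at most one. Hence $d_1$ and $d_2$ are genuinely the unique extra points attached to $m_1$ and $m_2$.

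Then I would apply Lemma~\ref{lem:cor2} to the configuration $p\inc m_1,m_2$, $d_1\inc m_1$, $d_2\inc m_2$ with $m_1,m_2\in B''_j$ and $d_1,d_2\notin B'_j$: it produces a line $n$ with $d_1,d_2\inc n$. Since $p\in B'_j$ while $d_1\in B'_i$ and $d_2\notin B'_j$ are all distinct and, by partial linearity (as $m_1\neq m_2$), non-collinear, the points $p,d_1,d_2$ form a triangle whose two sides $m_1=\LineOn(p,d_1)$ and $m_2=\LineOn(p,d_2)$ lie in $B''_j$, whose apex $p$ lies in $B'_j$, and whose base is $n=\LineOn(d_1,d_2)$.

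The crux is the last step, proving $d_2\in B'_i$. Writing $d_2\in B'_{i'}$ for the class supplied by the covering, the claim is exactly that $i'=i$, i.e. that the extra points of two intersecting lines of $B''_j$ fall in a single class; this is the single-valuedness of $\baero$ on concurrent lines. I would argue by contradiction: assuming $B'_{i'}\neq B'_i$ (so $B'_i,B'_{i'},B'_j$ are pairwise disjoint by \eqref{ax:emporeq}), I would feed the triangle $(p,d_1,d_2)$ into the ``correlation'' conditions \eqref{ax:cor3}/\eqref{ax:dualcor3} to force $n\in B''_i$, and then invoke the uniqueness of the extra point on $n$ together with $d_2\notin B'_j$ to push $d_2$ back into $B'_i$, contradicting $d_2\notin B'_i$. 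The main obstacle is precisely this transport: unlike the preliminary steps it cannot be settled by closedness alone, since in the tight case $n$ carries no point of $B'_i$ besides $d_1,d_2$ and the first closedness condition becomes circular; it must genuinely use the concurrence-versus-collinearity content of \eqref{ax:cor3} and \eqref{ax:dualcor3} against \eqref{ax:antybaer}. The dual statement then follows by exchanging the roles of points and lines and of $B'$ and $B''$ throughout, since the whole condition system is self-dual.
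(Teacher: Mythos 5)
Your scaffolding is exactly the paper's: the point $p=m_1\sqcap m_2$ lies in $B'_j$ by closedness of $B_j$, disjointness via \eqref{ax:emporeq} puts $d_1,d_2$ outside $B'_j$, and Lemma~\ref{lem:cor2} supplies the joining line $n$ (with $n\neq m_1,m_2$, since $n=m_1$ together with $d_2\neq p$ would force $m_1=m_2$). But your crux step contains a genuine gap, in two places. First, conditions \eqref{ax:cor3}/\eqref{ax:dualcor3} cannot ``force $n\in B''_i$'': instantiating \eqref{ax:cor3} with the class $i$ fails outright because its hypothesis requires $d_1\notin B'_i$, while instantiating it with $j$ merely re-derives the existence of $n$; and \eqref{ax:dualcor3} applied to $d_1,d_2$ on $n$ is satisfied vacuously, since its disjunctive conclusion already holds via $d_2\notin B'_i$ (the very thing you assumed for contradiction), so it yields no information. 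The mechanism that actually forces $n\in B''_i$ is condition \eqref{ax:antybaer} applied to $B_i$: the point $d_1\in B'_i$ lies on the two distinct lines $m_1,n$, where $m_1\in B''_j$ gives $m_1\notin B''_i$ by \eqref{ax:dualemporeq}; if also $n\notin B''_i$, closedness of the complement $\struct{M\setminus B'_i,\lines\setminus B''_i}$ would expel $d_1$ from $B'_i$.

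Second, your finish via ``uniqueness of the extra point on $n$'' does not close the argument: knowing $n\in B''_i$ and that $n$ carries exactly one point outside $B'_i$ does nothing to exclude $d_2$ from being that point, and the hypothesis $d_2\notin B'_j$ is irrelevant here because at this stage of the development nothing locates the extra points of lines of $B''_i$ in any particular class --- that localization (the map $\baero$ being well defined) is precisely what this lemma is establishing, so invoking it is circular, a danger you half-anticipated but then tried to escape through \eqref{ax:cor3}/\eqref{ax:dualcor3}, which, as noted, do not apply. The paper's actual finish is symmetric and purely set-theoretic: writing $d_2\in B'_{i'}$, run the same \eqref{ax:antybaer} argument on the class $i'$ (using $d_2\in B'_{i'}$ and $m_2\in B''_j$, hence $m_2\notin B''_{i'}$) to force $n\in B''_{i'}$ as well; then $n\in B''_i\cap B''_{i'}$, and condition \eqref{ax:dualemporeq} gives $B''_i=B''_{i'}$, hence $i=i'$. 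So the repair is short, but it runs through \eqref{ax:antybaer} twice plus \eqref{ax:dualemporeq}, not through the correlation conditions; the latter enter only once, packaged inside Lemma~\ref{lem:cor2}, and \eqref{ax:dualcor3} is needed only for the dual statement.
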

\begin{proof}
Let us take some $I\ni i_1,i_2\neq i$. Assume that $d_1\in B'_{i_1}$,
$d_2\in B'_{i_2}$ , $i_1\neq i_2$,
and $d_1\inc m_1\in B''_j$, $d_2\inc m_2\in B''_j$ and $m_1\sqcap m_2\neq\emptyset$.
There exists a line $n$ which joins $d_1,d_2$, from \ref{lem:cor2}.
Then, condition \eqref{ax:antybaer} yields that $n\notin B''_j$.
Note, that $m_1,m_2\notin B''_{i_1},B''_{i_2}$. So, if moreover
$n\notin B''_{i_1}$ or $n\notin B''_{i_2}$ then
$d_1\notin B'_{i_1}$ or $d_2\notin B'_{i_2}$
follows from condition \eqref{ax:antybaer}.
Hence, $n\in B''_{i_1}$ and $n\in B''_{i_2}$ holds.
Together with condition \eqref{ax:dualemporeq} it implies that $i_1=i_2$,
which closes our proof.
\end{proof}
Note, that since each of close substructure of $\goth M$ is connected the
conclusion of \ref{lem:function} remain also valid for $m_1\sqcap m_2 =\emptyset$.
Then, using \ref{cor:surjectivity} and \ref{lem:function},  we obtain
\begin{cor}\label{cor:bijectivity}
The map $\baero$ is a bijection of the set $I$.
\end{cor}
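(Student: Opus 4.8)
The plan is to verify the four properties that together make $\baero$ a bijection of $I$: that it is everywhere defined, surjective, single-valued, and injective. Reading the relation as $\baero(x)=y \iff x\baero y$, Corollary \ref{cor:surjectivity} already supplies the first two: ``for every $i$ there exists $j$ with $i\baero j$'' says every $i$ has an image, and ``for every $j$ there exists $i$ with $i\baero j$'' says every $j$ is an image. So the whole burden is to show that $\baero$ is a function and that this function is one-to-one. Throughout I use that, by \eqref{ax:emporeq} and \eqref{ax:dualemporeq}, the sets $B'_i$ partition $M$ and the sets $B''_i$ partition $\lines$, so that an index is pinned down by its point-class, and dually by its line-class.

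First I would establish single-valuedness. Fix $j$ and consider the \emph{added points} of $B_j$, i.e.\ the points lying on lines of $B''_j$ but outside $B'_j$. By \eqref{ax:dualonetoall} every line of $B''_j$ carries such a point, so added points exist, and each lies in a unique class $B'_{i_0}$ with $i_0\neq j$ (it is outside $B'_j$, and the classes are disjoint). The key step is to show that \emph{all} added points of $B_j$ lie in this single class $B'_{i_0}$: for two added points carried by two intersecting lines of $B''_j$ this is exactly Lemma \ref{lem:function}, and for two added points on disjoint lines of $B''_j$ it follows from the same lemma together with the connectedness of $B_j$ recorded in the remark preceding this corollary, by chaining along a connecting path inside $B_j$. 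Now any pair $(l,a)$ witnessing $j\baero i$ has $a\in B'_i$ with $i\neq j$, hence $a\notin B'_j$; thus $a$ is an added point, so $a\in B'_{i_0}$, which forces $B'_i=B'_{i_0}$ and therefore $i=i_0$. Hence $\baero(j)=i_0$ is uniquely determined.

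Injectivity I would obtain by the dual argument, using the dual form of Lemma \ref{lem:function} (asserted valid in its statement) and condition \eqref{ax:onetoall} in place of \eqref{ax:dualonetoall}. Fix $i$ and consider the \emph{added lines} of $B_i$: the lines through points of $B'_i$ that do not belong to $B''_i$. By \eqref{ax:onetoall} each point of $B'_i$ lies on such a line, and by the dual of \ref{lem:function} together with connectedness all these added lines lie in a single line-class $B''_{j_0}$. Any $j$ with $j\baero i$ is realised by some $l\in B''_j$ through a point of $B'_i$ with $j\neq i$, so $l\notin B''_i$, i.e.\ $l$ is an added line of $B_i$; thus $l\in B''_{j_0}$, whence $B''_j=B''_{j_0}$ and $j=j_0$. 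So at most one $j$ maps to $i$. Combining the four properties, $\baero$ is a bijection of $I$.

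The main obstacle is the single-valuedness step, and dually the injectivity step, since this is where the geometry enters: one must propagate the conclusion of Lemma \ref{lem:function} from a single pair of intersecting lines to the whole, possibly infinite, connected substructure. The connectedness extension of \ref{lem:function} is the crucial tool, and some care is needed to check that every witness for $j\baero i$ is genuinely an added point, i.e.\ lies outside $B'_j$, which is exactly what makes the lemma applicable; this is guaranteed by $i\neq j$ together with the disjointness of the classes coming from \eqref{ax:emporeq}.
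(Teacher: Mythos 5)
Your proof is correct and follows essentially the same route as the paper, which derives the corollary precisely from Corollary \ref{cor:surjectivity} together with Lemma \ref{lem:function} (and its dual), extended to disjoint lines via the connectedness remark immediately preceding the statement. You merely make explicit the details the paper leaves implicit --- the chaining along a connected substructure and the use of conditions \eqref{ax:emporeq} and \eqref{ax:dualemporeq} to pin down indices by their point- and line-classes --- which is a faithful elaboration rather than a different argument.
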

\noindent
Directly from definitions we have
\begin{fact}\label{fact:connected}
If $a_j\in B'_{i_j}$, $j=1,2$ and $a_1\sim a_2$ then $i_1=i_2$, 
$i_1\baero i_2$ or $i_2\baero i_1$.
\end{fact}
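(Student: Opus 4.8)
The plan is to produce a single line that simultaneously witnesses all three alternatives and then to read off the conclusion from the definition \eqref{def:przechod} of $\baero$ together with the bijectivity of $\baero$ proved in \ref{cor:bijectivity}. No case distinction on whether $a_1$ and $a_2$ coincide will be needed, since the whole argument only uses a common line of $a_1,a_2$, which exists regardless.

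First I would unwind the hypothesis $a_1\sim a_2$: by the definition of $\sim$ there is a line $l\in\lines$ with $a_1,a_2\inc l$. Since the family covers the line set, $\bigcup_{i\in I}B''_i=\lines$, this $l$ lies in $B''_k$ for some $k\in I$. I now have one line $l$ carrying $a_1\in B'_{i_1}$ and $a_2\in B'_{i_2}$ and belonging to $B''_k$. Feeding the incidence $a_1\inc l$ with $a_1\in B'_{i_1}$ and $l\in B''_k$ into \eqref{def:przechod} gives $k\baero i_1$ whenever $k\neq i_1$; symmetrically $a_2\inc l$, $a_2\in B'_{i_2}$, $l\in B''_k$ give $k\baero i_2$ whenever $k\neq i_2$.

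A four-fold split on whether $k$ equals $i_1$ and $i_2$ then closes the argument. If $k=i_1=i_2$ we are in the case $i_1=i_2$; if $k=i_1\neq i_2$ then $k\baero i_2$ reads $i_1\baero i_2$; if $k=i_2\neq i_1$ then $k\baero i_1$ reads $i_2\baero i_1$. The remaining case $k\neq i_1$ and $k\neq i_2$ is where the substance lies: here I obtain both $k\baero i_1$ and $k\baero i_2$, and I conclude $i_1=i_2$ from the single-valuedness of $\baero$. This last step is the only real obstacle, and it is precisely where I would invoke \ref{cor:bijectivity} (that $\baero$ is a bijection of $I$, in particular a function), which in turn rests on \ref{lem:function}; every other step is a direct unfolding of the definitions of $\sim$, of the covering, and of $\baero$.
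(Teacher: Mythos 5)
Your proof is correct, and there is no circularity: \ref{cor:bijectivity} is established in the paper before the fact in question (via \ref{lem:function} and the remark extending it to disjoint lines), so invoking the single-valuedness of $\baero$ in your residual case $k\neq i_1,i_2$ is legitimate, and your other three cases are pure unwindings of \eqref{def:przechod}. Note, however, that the paper offers no written proof at all --- the fact is announced with ``Directly from definitions we have'' --- and the most literal reading of that phrase suggests a more elementary disposal of the fourth case than the one you chose: if $l\in B''_k$ with $k\neq i_1,i_2$, then (under the tacit standing assumption that distinct indices carry disjoint point sets, which condition \eqref{ax:emporeq} only reduces to equality of the sets) both $a_1$ and $a_2$ lie in $M\setminus B'_k$, and condition \eqref{ax:antybaer} --- closedness of $\struct{M\setminus B'_k,\lines\setminus B''_k}$ --- forbids a line of $B''_k$ from carrying two distinct points outside $B'_k$; hence $a_1=a_2$, and $i_1=i_2$ follows from \eqref{ax:emporeq}. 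So where you pay with the machinery behind \ref{cor:bijectivity} (lemma \ref{lem:function}, which itself rests on \ref{lem:cor2}, i.e.\ on conditions \eqref{ax:cor3}, \eqref{ax:antybaer} and \eqref{ax:dualemporeq}), the direct route spends only \eqref{ax:antybaer} and \eqref{ax:emporeq}. What your version buys in exchange is a cleaner, purely index-level argument: every step works with index inequalities exactly as \eqref{def:przechod} is phrased, so you never need to discuss whether $a_1$ or $a_2$ belongs to $B'_k$, and you thereby sidestep the index-versus-point-set identification that the paper glosses over throughout. Both arguments are sound; yours is slightly heavier in its prerequisites but arguably tighter in its bookkeeping.
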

\begin{prop}
For arbitrarily fixed $i_0\in I$ and the map $\baero$ defined in 
\eqref{def:przechod} the following holds:
$$\struct{\baero}[i_0]=\{\baero^s(i_0): s=0,\pm 1,\pm 2,\ldots\}=I.$$
Then, the map $\baero$ determines on $I$ the structure of a group
$C_k$ (with $k=|I|$) for a finite set $I$ or the structure of
$\mathbb{Z}$ otherwise. 
Moreover, $i\baero j$ iff $j=i+1$.
\end{prop}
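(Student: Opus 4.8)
The plan is to treat $\baero$ as the permutation of $I$ supplied by Corollary~\ref{cor:bijectivity} and to prove that it acts with a single orbit; the cyclic or infinite cyclic group structure is then automatic. First I would record the standard fact that, since $\baero$ is a bijection, the powers $\baero^s$ ($s\in\mathbb{Z}$) are defined and ``$j=\baero^s(i)$ for some $s$'' is an equivalence relation on $I$ whose classes are exactly the orbits of $\baero$. Moreover, for a permutation the symmetric relation ``$i\baero j$ or $j\baero i$'' generates precisely this orbit-equivalence, because $i\baero j$ means $\baero(i)=j$, i.e. $\baero^{-1}(j)=i$, so travelling along such steps is the same as applying $\baero^{\pm1}$. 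Hence it suffices to show that any two indices lie in one orbit.

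The core of the proof is connectedness of $\goth M$. Given $i,i'\in I$, I would pick points $a\in B'_i$ and $a'\in B'_{i'}$ and, using that $\goth M$ is connected, a chain $a=p_0\sim p_1\sim\cdots\sim p_n=a'$ of successively collinear points. Since $B$ covers $M$, each $p_k$ lies in some $B'_{j_k}$, and here the key move is to pin the endpoints by choosing $j_0=i$ and $j_n=i'$ rather than arbitrary labels. Applying Fact~\ref{fact:connected} to each collinear pair $p_k\sim p_{k+1}$ gives $j_k=j_{k+1}$, $j_k\baero j_{k+1}$ or $j_{k+1}\baero j_k$, so consecutive indices are orbit-equivalent; chaining yields that $i=j_0$ and $i'=j_n$ lie in the same orbit. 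As $i,i'$ are arbitrary, $\baero$ has a single orbit, whence $\struct{\baero}[i_0]=\{\baero^s(i_0):s=0,\pm1,\pm2,\ldots\}=I$ for every $i_0$.

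With transitivity established, the group structure follows formally. The surjection $s\mapsto\baero^s(i_0)$ from $\mathbb{Z}$ onto $I$ is injective when $I$ is infinite, since a coincidence $\baero^s(i_0)=\baero^t(i_0)$ with $s<t$ would make $\baero^{t-s}$ fix $i_0$ and so force the orbit, hence $I$, to be finite; this gives $I\cong\mathbb{Z}$. When $I$ is finite I would let $k=|I|$ be the least positive integer with $\baero^k(i_0)=i_0$ and identify $\baero^s(i_0)$ with $s\bmod k$, giving $I\cong C_k$. Transporting the additive structure of $\mathbb{Z}$ (respectively $C_k$) along this identification turns $\baero$ into the translation $s\mapsto s+1$, which is exactly the final clause $i\baero j$ iff $j=i+1$.

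I expect the only genuine obstacle to be the orbit step, and inside it the bookkeeping over which substructure a given point is assigned to: by condition~\eqref{ax:emporeq} a point may belong to several indices whose point-sets merely coincide. The way I would neutralise this is precisely the pinning of $j_0=i$ and $j_n=i'$ noted above, which is legitimate because Fact~\ref{fact:connected} holds for \emph{every} admissible labelling of the two endpoints of a collinear pair; this eliminates the ambiguity and lets the chain argument run verbatim, so that no further appeal to the finer relationship between $B'_i$ and $B''_i$ is needed.
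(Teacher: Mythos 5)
Your proposal is correct and takes essentially the same route as the paper's proof: establish that the permutation $\baero$ from \ref{cor:bijectivity} has a single orbit via Fact \ref{fact:connected} together with connectedness of $\goth M$, then transport the group structure of $\mathbb{Z}$ or $C_k$ along $s\mapsto\baero^s(i_0)$ to obtain $i\baero j$ iff $j=i+1$. The only differences are cosmetic: you spell out the collinearity-chain argument (with the endpoint-pinning remark) that the paper compresses into ``immediate from \ref{fact:connected} and connectedness,'' and you replace the paper's contradiction-based bijectivity argument for $\gamma$ with the standard least-period/orbit-size observation.
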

\begin{proof}
The set $\struct{\baero}$ is a group of bijections, from \ref{cor:bijectivity}.
Let us fix some $i_0\in I$. Then, $\struct{\baero}[i_0]$ is an orbit of
the group $\struct{\baero}$. The equality 
$\struct{\baero}[i_0]=I$ is immediate from \ref{fact:connected} and
connectedness of $\goth M$.
\par
Obviously $\struct{\baero}$ is a cyclic group. Then, either 
$\struct{\baero}\cong C_k$ if $k$ is a finite rank of $\struct{\baero}$ or
$\struct{\baero}\cong\mathbb{Z}$ otherwise. We denote the rank of the group
$\struct{\baero}$ as $r_{\struct{\baero}}$.
For $s\in C_k$ (or $s\in \mathbb{Z}$) let us consider the map
$$s\mappedby{\gamma}\baero^s(i_0).$$
One can note that $\gamma$ induces the following map
$\struct{\baero}\ni\baero^s\mappedby \baero^s(i_0)\in I$.
\par
We claim that $\gamma$ is a bijection.
Let us assume that $\baero^{s_2}(i_0)=\baero^{s_1}(i_0)$. It yields 
$\baero^{s_2-s_1}(i_0)=i_0$ for some $s_1, s_2$ such that 
$r_{\struct{\baero}}>s_2>s_1$.
Therefore $\baero^{k}(i_0)=i_0$ holds for $k<r_{\struct{\baero}}$.
If 
$I_1:=\{\baero^j(i_0): j=0,\ldots,k-1\}$ then $\baero^s(I_1)=I_1$ holds
for any $s\in \mathbb{Z}$. 
Thus, $\struct{\baero}[i_0]\subset I_1$ and consequently $I=I_1$.
It implies that $\baero^k(i)=i$ for any $i\in I$. Finally, we obtain
that the rank of $\struct{s}$ is less than $k$, which contradicts 
our previous assumptions.
\par
Let $s'=s+1$ hold. 
From \ref{cor:bijectivity} 
$\baero(\baero^s(i_0))=\baero^{s+1}(i_0)$ and $i$, $\baero(i)$ are
$\rho$-related for any $i\in I$.
Hence, $\gamma(s)\baero \gamma(s')$ holds. This proves that $\gamma$ transfers
the structure of the group $C_k$ (or $\mathbb{Z}$) on $\struct{\baero}[i_0]$.
\end{proof}
\begin{lem}\label{lem:correlation}
Let $m_1,m_2,m_3\in B''_j$ and $d_1,d_2,d_3\in B'_i$ such that
$d_1\inc m_1$, $d_2\inc m_2$, $d_3\inc m_3$ for some $j,i\in I$, $j\neq i$.
Then, $m_1,m_2,m_3$ meet in a point distinct from $d_1,d_2,d_3$ iff
$d_1,d_2,d_3$ are on a line distinct from $m_1,m_2,m_3$.
\end{lem}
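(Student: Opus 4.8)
The plan is to prove the two implications separately; they are dual, the forward one resting on condition \eqref{ax:cor3} and the backward one on its dual \eqref{ax:dualcor3}. So I would write out the direction ``$m_1,m_2,m_3$ concurrent $\Rightarrow$ $d_1,d_2,d_3$ collinear'' in full and only indicate that the converse is obtained by dualising. Before either condition can be applied, the decisive preparatory step is to pin down the index at which to invoke \eqref{ax:cor3}: since $d_1,d_2,d_3$ are to play the role of the points lying \emph{outside} the distinguished substructure, I must show that $d_1,d_2,d_3\notin B'_j$, while the lines $m_1,m_2,m_3$ stay \emph{inside} $B''_j$ (which is given). Thus the whole argument is set up around the index $j$.

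The key preliminary claim is therefore: \emph{if $d\in B'_i$, $d\inc m\in B''_j$ and $i\neq j$, then $d\notin B'_j$.} To see this I first note that the hypotheses are exactly those in the definition \eqref{def:przechod} of $\baero$, so $j\baero i$, and hence by the preceding proposition $i=\baero(j)$ is the successor of $j$. Next I show that $B'_j$ and $B'_{\baero(j)}$ are disjoint. Indeed, by \eqref{ax:dualonetoall} every line of $B''_j$ carries a point $e\notin B'_j$; covering places $e$ in some $B'_s$, and since $e\notin B'_j$ condition \eqref{ax:emporeq} forces $B'_s\neq B'_j$, whence $B'_s\cap B'_j=\emptyset$ and $s\neq j$; applying \eqref{def:przechod} once more gives $s=\baero(j)$. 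So $B'_{\baero(j)}$ contains a point missing from $B'_j$, and \eqref{ax:emporeq} upgrades this to $B'_{\baero(j)}\cap B'_j=\emptyset$. Since $d_k\in B'_i=B'_{\baero(j)}$, the claim yields $d_1,d_2,d_3\notin B'_j$.

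With this in hand the forward implication is quick. Assuming $m_1,m_2,m_3$ share a point $p$ with $p\neq d_1,d_2,d_3$, I apply \eqref{ax:cor3} with the fixed index taken to be $j$: its premises $p\inc m_1,m_2,m_3$, $d_k\inc m_k$ and $d_1,d_2,d_3\notin B'_j$ all hold, and because $m_1,m_2,m_3\in B''_j$ each disjunct $m_k\notin B''_j$ fails, so the conclusion delivers a line $n$ with $d_1,d_2,d_3\inc n$. That $n$ is distinct from each $m_k$ I read off from the partial linear space axiom: were $n=m_1$, then $p$ and $d_2$ would both lie on $m_1$ and on $m_2$ with $p\neq d_2$, forcing $m_1=m_2$, and similarly $m_1=m_3$; thus $n$ can coincide with an $m_k$ only in the totally degenerate situation $m_1=m_2=m_3$, in which case the uniqueness of the point of a line of $B''_j$ outside $B'_j$ (from \eqref{ax:dualonetoall} and \eqref{ax:antybaer}) collapses $d_1=d_2=d_3$ and both sides of the equivalence hold trivially, since points and lines of $\goth M$ carry at least two lines and at least two points respectively. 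The converse implication is the exact dual: one checks $m_1,m_2,m_3\notin B''_i$ (the dual disjointness $B''_i\cap B''_j=\emptyset$, obtained from \eqref{ax:onetoall}, the dual \eqref{ax:dualemporeq} of \eqref{ax:emporeq}, and the ``added-line'' version of the argument above), and then \eqref{ax:dualcor3} applied at the index $i$ produces the common point $p$, with $p\neq d_k$ again by the uniqueness axiom.

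The step I expect to be the real obstacle is the preliminary claim $d_k\notin B'_j$: everything downstream is a mechanical application of \eqref{ax:cor3} and \eqref{ax:dualcor3} together with the partial linear space uniqueness axiom, but that claim is where the global combinatorics of the covering --- the cyclic action of $\baero$ established just before, combined with \eqref{ax:emporeq}, \eqref{ax:dualonetoall} and \eqref{ax:antybaer} --- genuinely enters, and it is also the point at which one must be careful not to assume tacitly that distinct indices give distinct point sets.
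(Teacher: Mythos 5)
Your proof is correct and follows essentially the same route as the paper's: establish that $d_1,d_2,d_3\notin B'_j$ (resp.\ $m_1,m_2,m_3\notin B''_i$) and then invoke \eqref{ax:cor3} (resp.\ its dual \eqref{ax:dualcor3}), with the converse obtained by dualisation. The only difference is that you fill in two points the paper leaves terse --- deriving the disjointness $B'_i\cap B'_j=\emptyset$ via the bijectivity of $\baero$ and conditions \eqref{ax:onetoall}--\eqref{ax:dualonetoall} rather than tacitly assuming distinct indices give distinct point sets, and justifying $n\neq m_1,m_2,m_3$ (with its degenerate case) from the partial linear space axiom --- both of which are sound elaborations, not a different argument.
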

\begin{proof}
Let $p\neq d_1,d_2,d_3\in B'_i$ be the common point of the lines
$m_1,m_2,m_3\in B''_j$. The condition \eqref{ax:emporeq} yields
$d_1,d_2,d_3\notin B'_j$, since $j\neq i$ .
Then, directly from condition \eqref{ax:cor3} we get the existence of the line
$n\neq m_1,m_2,m_3$, which connects $d_1,d_2,d_3$.

Analogously, based on conditions \eqref{ax:dualemporeq} and \eqref{ax:dualcor3},
we prove the converse implication.
\end{proof}

Now, we define another relation $\dod\;\; \subseteq M \times \lines$ by the formula
\begin{equation}\label{def:extrapoint}
 a\dod m \text{ iff } a\inc m \text{ and } \neg \pexists{i\in I}
 [a\in B'_i \land m\in B''_i]
\end{equation}
\begin{lem}\label{lem:dodisfunction}
The relation $\dod\;\; \subseteq M \times \lines$ defined in \eqref{def:extrapoint}
is a bijection acting from $M$ onto $\lines$.
\end{lem}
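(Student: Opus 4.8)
The plan is to verify the four properties that together express that $\dod$ is a bijection $M\to\lines$: that it is total and single-valued (hence a function), and injective and surjective. The defining formula of $\dod$ is invariant under the formal point--line duality interchanging $B'_i$ with $B''_i$, so these four properties fall into two dual pairs, and it suffices to treat one member of each. The "uniqueness" directions (single-valued, injective) will follow at once from the complement condition \eqref{ax:antybaer}, while the "existence" directions (total, surjective) will come from \eqref{ax:onetoall} and \eqref{ax:dualonetoall}.

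I would first settle single-valuedness. Suppose $a\dod m_1$ and $a\dod m_2$ with $m_1\neq m_2$, and fix $i$ with $a\in B'_i$. Instantiating the definition of $\dod$ at this $i$ gives $m_1\notin B''_i$ and $m_2\notin B''_i$, while $a\inc m_1,m_2$. By \eqref{ax:antybaer} the pair $\struct{M\setminus B'_i,\lines\setminus B''_i}$ is a closed substructure, so its second closure condition, applied to the two distinct lines $m_1,m_2$ (both lying outside $B''_i$) meeting at $a$, yields $a\notin B'_i$, a contradiction. Injectivity is the exact dual: from $a_1\dod m$, $a_2\dod m$ with $a_1\neq a_2$, fixing $j$ with $m\in B''_j$ gives $a_1,a_2\notin B'_j$, and the first closure condition of the complement $\struct{M\setminus B'_j,\lines\setminus B''_j}$ forces $m\notin B''_j$, again a contradiction.

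For totality, given $a\in M$ I choose $i$ with $a\in B'_i$ and invoke \eqref{ax:onetoall} to obtain a line $m$ with $a\inc m$ and $m\notin B''_i$; the claim is that $a\dod m$. If not, then $a\in B'_k$ and $m\in B''_k$ for some $k$, whence $a\in B'_i\cap B'_k$ and \eqref{ax:emporeq} gives $B'_i=B'_k$. The argument is then closed by the auxiliary fact that coinciding point sets force coinciding line sets, $B'_i=B'_k\implies B''_i=B''_k$: this puts $m\in B''_k=B''_i$, contradicting $m\notin B''_i$. Surjectivity is obtained dually, replacing \eqref{ax:onetoall} and \eqref{ax:emporeq} by \eqref{ax:dualonetoall} and \eqref{ax:dualemporeq} and using the dual auxiliary fact $B''_j=B''_k\implies B'_j=B'_k$.

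The heart of the matter, and the step I expect to cost the most, is this auxiliary fact. To prove $B'_i=B'_k\implies B''_i=B''_k$ I would argue by contradiction: if $B''_i\neq B''_k$ then \eqref{ax:dualemporeq} makes $B''_i$ and $B''_k$ disjoint. Setting $P=B'_i=B'_k$, the first closure condition for $B_i$ and for $B_k$ shows that any line carrying two distinct points of $P$ would belong to $B''_i\cap B''_k=\emptyset$; hence no line joins two points of $P$. But $B_i$ is connected and nondegenerate, and in a connected incidence structure with at least two points a shortest path between two of them runs point--line--point and so exhibits a line through two distinct points -- a contradiction. This is the only place where connectedness of the substructures is essential; it is genuinely needed, because for $2$-element lines the closure conditions by themselves do not determine which substructure an incident pair belongs to, so one cannot read off $B''_i$ from $B'_i$ without the connectedness input.
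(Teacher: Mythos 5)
Your proof is correct and follows the same skeleton as the paper's own (four-line) proof: existence of images and preimages from \eqref{ax:onetoall} and \eqref{ax:dualonetoall}, single-valuedness and injectivity from the closedness of the complement \eqref{ax:antybaer}. The uniqueness half of your argument is, step for step, the paper's argument. Where you genuinely add something is in the existence steps: the paper simply asserts that \eqref{ax:onetoall} yields some $m$ with $a\dod m$, silently passing over the possibility that the line $m\notin B''_i$ supplied by \eqref{ax:onetoall} might still lie in $B''_k$ for another index $k$ with $a\in B'_k$ --- condition \eqref{ax:emporeq} only forces $B'_i=B'_k$, not $B''_i=B''_k$. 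Your auxiliary fact, that $B'_i=B'_k$ implies $B''_i=B''_k$ (via \eqref{ax:dualemporeq}, the first closure condition, and connectedness of the substructures), together with its dual for the surjectivity step, is exactly what is needed to make the definition of $\dod$ cooperate with the per-index statements \eqref{ax:onetoall} and \eqref{ax:dualonetoall}; this is a real improvement on the printed proof, and your diagnosis that connectedness is the indispensable input (an anticlique of points constrains its line set not at all) is sound, even if the parenthetical about $2$-element lines is not quite the right way to phrase it. One small caveat: your shortest-path argument requires $|B'_i|\geq 2$ (dually $|B''_j|\geq 2$), which you cover only by the word ``nondegenerate''; this is not among the stated hypotheses, though it is implicit in the paper's intended setting, where each $B_i$ is a copy of a partial linear space and hence, if it contains a point at all, contains at least two points and two lines. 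With that degenerate case set aside (as the paper itself sets it aside), your argument is complete, and more rigorous than the original.
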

\begin{proof}
From condition \eqref{ax:onetoall} we get that for every $a\in M$ there 
exists such $m\in \lines$ that $a\dod m$. Assume that
there exist $a\in M$ and $m_1,m_2\in \lines$ such that
$a\dod m_1$ and $a\dod m_2$. Then, from condition
\eqref{ax:antybaer}, $m_1=m_2$. Thus, relation $\dod$ is a function.
Conditions \eqref{ax:dualonetoall} and \eqref{ax:antybaer} yield 
that $\dod$ is a surjection and an injection, respectively.
\end{proof}
Next, let us introduce the following pair of transformations:
\begin{equation}\label{def:subcor}
\left\{ \begin{array}{ll}
               \kor'(a)=m \text{ iff } a\dod m\\
               \kor''(m)=a \text{ iff } a\dod m,
              \end{array} \right.
\end{equation}
which are in fact functions, what follows from \ref{lem:dodisfunction}.
Take $j,i\in I$ such that $j\baero i$. Then we have $\kor'(B'_j)= B''_i$
and $\kor''(B''_i)= B'_j$. To get a proper, full correlation
acting from the structure $B_j$ to $B_i$ let us note the following.
\begin{fact}\label{fact:corofsubstr}
Let $j\baero i$ for $j,i\in I$ and $\kor=(\kor',\kor'')$ be the function
introduced in \eqref{def:subcor}, and
$\phi'_j:=\kor'\restriction B'_j$, $\psi''_i:=\kor''\restriction B''_i$.
There exist maps $\phi''_j\colon B''_j\longrightarrow B'_i$,
$\psi'_i\colon B'_i\longrightarrow B''_j$, which
complete $\phi'_j$ and $\psi''_i$ to the correlations
$$\phi_j=(\phi'_j,\phi''_j),\;\;\; \psi_i= (\psi'_i,\psi''_i),$$
where
$\phi_j\colon B_j \longrightarrow B_i$, $\psi_i\colon B_i \longrightarrow B_j$,
and $\phi''_j=(\psi'_i)^{-1}$, $\psi''_i=(\phi'_j)^{-1}$ hold.
\end{fact}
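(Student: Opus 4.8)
The plan is to reduce the statement to the construction of a single correlation. Observe first that, by \eqref{def:subcor} and Lemma \ref{lem:dodisfunction}, $\kor'$ and $\kor''$ are mutually inverse bijections between $M$ and $\lines$; restricting them to $B'_j$ and $B''_i$ and using $\kor'(B'_j)=B''_i$, $\kor''(B''_i)=B'_j$, one sees at once that $\phi'_j\colon B'_j\to B''_i$ and $\psi''_i\colon B''_i\to B'_j$ are mutually inverse bijections, so the required relation $\psi''_i=(\phi'_j)^{-1}$ holds automatically. Hence it suffices to complete $\phi'_j$ to a correlation $\phi_j=(\phi'_j,\phi''_j)\colon B_j\to B_i$; one then simply sets $\psi_i:=\phi_j^{-1}$, which is again a correlation $B_i\to B_j$, reads off $\psi'_i=(\phi''_j)^{-1}$ and $\psi''_i=(\phi'_j)^{-1}$, and checks that this $\psi''_i$ coincides with the given restriction $\kor''\restriction B''_i$. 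The relations $\phi''_j=(\psi'_i)^{-1}$ and $\psi''_i=(\phi'_j)^{-1}$ then record nothing more than $\psi_i=\phi_j^{-1}$.

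To build $\phi''_j$ I would argue geometrically. Fix a line $m\in B''_j$ and let $a_1,a_2,a_3,\dots\in B'_j$ be its points. Each image $\phi'_j(a_s)=\kor'(a_s)$ is a line of $B''_i$, and $a_s\dod\phi'_j(a_s)$, so $a_s=\kor''(\phi'_j(a_s))\inc\phi'_j(a_s)$; moreover $m\neq\phi'_j(a_s)$, since $m\in B''_j$, $\phi'_j(a_s)\in B''_i$ and $B''_i\cap B''_j=\emptyset$ by \eqref{ax:dualemporeq} (as $B''_i\neq B''_j$ for $i\neq j$). Applying Lemma \ref{lem:correlation} with the roles of $i$ and $j$ interchanged (lines in $B''_i$, points in $B'_j$, paired through $\dod$), the collinearity of $a_1,a_2,a_3$ on $m$ forces the three lines $\phi'_j(a_1),\phi'_j(a_2),\phi'_j(a_3)$ to be concurrent at some point of $B'_i$. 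Since any two distinct lines of a partial linear space meet in at most one point and $\phi'_j$ is injective, this common point does not depend on the chosen triple, so all the lines $\phi'_j(a)$, $a\inc m$, pass through a single point $p_m\in B'_i$; I define $\phi''_j(m):=p_m$.

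It remains to verify that $\phi_j=(\phi'_j,\phi''_j)$ reverses incidence both ways and is bijective. The implication $a\inc m\Rightarrow\phi''_j(m)\inc\phi'_j(a)$ is built into the definition of $p_m$; the converse implication, together with surjectivity of $\phi''_j$, follows from the dual version of Lemma \ref{lem:correlation}, which turns a pencil of lines of $B_i$ through a point $p\in B'_i$ into a collinear set of points of $B_j$, whence $p=\phi''_j(m)$ for the corresponding line $m\in B''_j$; connectedness of $B_j$ and $B_i$ then guarantees that every point of $B'_i$ is recovered this way, and injectivity of $\phi''_j$ comes from injectivity of $\phi'_j$ together with \eqref{ax:emporeq}, \eqref{ax:dualemporeq}. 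The delicate point—the one I expect to be the main obstacle—is precisely the well-definedness and bijectivity of $\phi''_j$ in the degenerate situations to which the triple-based Lemma \ref{lem:correlation} does not literally apply, e.g.\ lines of $B_j$ carrying only two points: there one cannot invoke a triple directly, and must instead fall back on the dual lemma and on connectedness to pin down the intersection point of the two image lines and to guarantee that it lies in $B'_i$.
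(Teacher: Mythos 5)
Your proof is correct and takes essentially the same route as the paper: its own two-line argument likewise obtains the completing maps $\phi''_j$, $\psi'_i$ directly from Lemma \ref{lem:correlation} and then notes that $\phi_j$ and $\psi_i$ are mutually inverse, exactly your reduction $\psi_i=\phi_j^{-1}$. The degenerate two-point-line case you flag is precisely what the pair version, Lemma \ref{lem:cor2}, is there for, so your fallback is the intended one; beyond that you merely spell out the well-definedness and bijectivity checks that the paper leaves implicit.
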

\begin{proof}
The existence of maps $\phi''_j:B''_j \longrightarrow B'_i$ and
$\psi'_i:B'_i \longrightarrow B''_j$ is immediate by \ref{lem:correlation}.
If $j\baero i$ then the transformation $\psi'_i$ is the inverse of $\phi''_j$,
and $\phi'_j$ is the inverse of $\psi''_i$.
\end{proof}


\section{Representation theorems}

Now we present, in some sense, more general construction. 
We set  $I=C_k$ or $I=\mathbb{Z}$.
Let us consider a family of connected partial linear spaces
$(\goth M_i)_{i\in I}=\struct{M_i,\lines_i,\inc_i}$ and
a family $(\phi_i)_{i\in I}$,
where $\phi_i=(\phi'_i,\phi''_i)$ is a correlation such that
$\phi_i\colon {\goth M}_i\longrightarrow {\goth M}_{i+1}$. Then we put
$$ X_i := \{i\}\times M_i,  \;\;\; X = \bigcup_{i\in I} X_i,
\quad\quad
H_i := \{i\}\times \lines_i, \;\;\; H = \bigcup_{i\in I} H_i,$$
and we introduce the following relation $\inc$
\begin{equation}\label{wz:relinc1}
  (i,a)\inc[j,m] \text{ iff, either } i=j \text{ and } a\inc_i m,\text{ or }
  i=j+1\text{ and }a=\phi''_j(m).
\end{equation}
As a result we obtain the structure
\begin{equation}\label{def:glue}
  \CyclSpace(i\in I,\goth M_i,\phi_i):=\struct{X,H,\inc}.
\end{equation}
Clearly ${\goth B}_i=\struct{X_i,H_i}$ is a closed substructure of
$\goth M=\CyclSpace(i\in I,\goth M_i,\phi_i)$ isomorphic to $\goth M_i$, for
each $i\in I$, and $\goth M$ is a partial linear space.
As an another straightforward consequence of the definition we get
\begin{fact}\label{satofaxioms}
The family $\{{\goth B}_i: i\in I\}$ of closed
substructures of the structure $\goth M$ is a covering of
$\goth M$.
This family satisfies all conditions from
\eqref{ax:emporeq} to \eqref{ax:dualcor3}.
\end{fact}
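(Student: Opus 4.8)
The plan is to verify each assertion directly from the defining incidence \eqref{wz:relinc1}, exploiting throughout that every $\phi_i=(\phi'_i,\phi''_i)$ is a correlation, hence an incidence-reversing bijection with $\phi'_i\colon M_i\to\lines_{i+1}$, $\phi''_i\colon\lines_i\to M_{i+1}$, and $a\inc_i m\iff \phi''_i(m)\inc_{i+1}\phi'_i(a)$. First I would record the two structural observations that drive everything. By \eqref{wz:relinc1}, a line $[i,m']\in H_i$ carries exactly the points $(i,a)$ with $a\inc_i m'$ together with the single point $(i+1,\phi''_i(m'))\notin X_i$; dually, a point $(i,a)\in X_i$ lies on the lines $[i,m']$ with $a\inc_i m'$ together with the single line $[i-1,(\phi''_{i-1})^{-1}(a)]\notin H_i$. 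The covering claim is then immediate, since every point of $\goth M$ is some $(i,a)\in X_i=B'_i$ and every line some $[i,m']\in H_i=B''_i$.

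Conditions \eqref{ax:emporeq} and \eqref{ax:dualemporeq} hold because the sets $X_i$ (resp.\ $H_i$) have pairwise distinct first coordinates, so any two are either equal or disjoint. For \eqref{ax:onetoall} I take $d=(i,a)$ and exhibit the exterior line $[i-1,(\phi''_{i-1})^{-1}(a)]$ from the observation above; \eqref{ax:dualonetoall} is handled dually using $[i,m']$ and its exterior point $(i+1,\phi''_i(m'))$. For \eqref{ax:antybaer} I must check that $\struct{X\setminus X_i,H\setminus H_i}$ is closed: if two distinct points outside $X_i$ lie on a common line $[j,m']$, then necessarily $j\neq i$, for otherwise both points would equal the unique exterior point $(i+1,\phi''_i(m'))$ of that line and hence coincide; the dual closure condition follows the same way from the uniqueness of the exterior line through an $X_i$-point.

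The substantive step is \eqref{ax:cor3} (and dually \eqref{ax:dualcor3}), which I expect to be the main obstacle, since it is precisely here that the correlation $\phi_i$ is used essentially rather than just as a bookkeeping bijection. Fix $i$ and suppose $m_1,m_2,m_3\in B''_i$ meet in a point $p$, with $d_j\inc m_j$ and $d_j\notin B'_i$. Writing $m_j=[i,m'_j]$, the structural observation forces each $d_j$ to be the exterior point $(i+1,\phi''_i(m'_j))$. Since a point of $\goth M$ on $[i,m'_j]$ lies in $X_i$ or $X_{i+1}$, the common point $p$ satisfies either $p=(i,p')\in X_i$ with $p'\inc_i m'_j$ for every $j$, or $p$ is the common exterior point $(i+1,c)$, in which case injectivity of $\phi''_i$ forces $m'_1=m'_2=m'_3$ and hence $d_1=d_2=d_3$, so any line through that single point serves as the required $n$. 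In the main case the incidence-reversing property gives $\phi''_i(m'_j)\inc_{i+1}\phi'_i(p')$ for each $j$, so all three points $d_j$ lie on the line $n:=[i+1,\phi'_i(p')]$; thus the disjunction in \eqref{ax:cor3} is satisfied through its first disjunct. Condition \eqref{ax:dualcor3} is obtained by the verbatim dual argument, replacing $\phi'_i,\phi''_i$ by the components of the inverse correlation $\phi_i^{-1}$ and interchanging the roles of points and lines. This completes the verification of all seven conditions.
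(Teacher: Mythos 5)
Your verification is correct and matches the paper's (implicit) argument: the paper gives no proof at all, presenting the fact as a ``straightforward consequence of the definition'' \eqref{wz:relinc1}, and a direct check of the seven conditions is exactly what is intended. Your two structural observations --- each line $[i,m']\in H_i$ carries exactly one point outside $X_i$, namely $(i+1,\phi''_i(m'))$, and dually each point $(i,a)\in X_i$ lies on exactly one line outside $H_i$ --- together with the incidence-reversing property of the correlation $\phi_i$ used for \eqref{ax:cor3} and \eqref{ax:dualcor3} (including your correct treatment of the degenerate case where injectivity of $\phi''_i$ forces the three lines to coincide), fill in that omitted verification completely and accurately.
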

%
%
\begin{prop}
Let $\goth M_i=\struct{M_i,\lines_i,\inc_i}$ be a connected partial linear
space for each of $i\in I$ and $(\xi_i)_{i\in I}=(\xi'_i,\xi''_i)$ be a
correlation of $\goth M_i$ onto $\goth M_{-i}$. Next, let
$\goth M=\CyclSpace(i\in I,\goth M_i,\phi_i)=\struct{X,H,\inc}$  be
the structure defined by \eqref{def:glue}.  Assume that
$$\phi''_{-i} \xi'_i \phi''_{i-1}= \xi''_i.$$
Then, the structure $\goth M$
is self dual and the map $\kor=(\kor',\kor'')$,
$\kor':X \longrightarrow H$, $\kor'':H \longrightarrow X$, where
$$\kor'((i,x))=[-i,\xi'_i(x)],\;\;\; \kor''([i,y])=(-i,\xi''_i(y))$$
is a correlation of $\goth M$.
\end{prop}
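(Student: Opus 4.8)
The plan is to prove directly that $\kor=(\kor',\kor'')$ is an incidence reversing bijection interchanging the point set $X$ with the line set $H$; once this is done, $\goth M$ is self dual by the very definition of a correlation. I would organise the verification in three stages: typing and bijectivity of the two component maps, the incidence reversing equivalence split according to layers, and the observation that the split is exhaustive. For the first stage, if $(i,x)\in X$ then $x\in M_i$, and since $\xi_i\colon\goth M_i\to\goth M_{-i}$ is a correlation we have $\xi'_i(x)\in\lines_{-i}$, so $\kor'((i,x))=[-i,\xi'_i(x)]\in H_{-i}\subseteq H$ is genuinely a line; dually $\kor''([i,y])=(-i,\xi''_i(y))\in X_{-i}\subseteq X$ is a point. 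As each $\xi'_i$ and $\xi''_i$ is a bijection (being a component of a correlation) and $i\mapsto -i$ permutes $I$, both $\kor'\colon X\to H$ and $\kor''\colon H\to X$ are bijections.

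The heart of the argument is the equivalence
$$(i,a)\inc[j,m]\iff\kor''([j,m])\inc\kor'((i,a)),$$
which I would decompose according to the two clauses of \eqref{wz:relinc1}. In the \emph{diagonal} clause $i=j$ with $a\inc_i m$, the images $\kor'((i,a))=[-i,\xi'_i(a)]$ and $\kor''([i,m])=(-i,\xi''_i(m))$ sit in the common layer $-i$, so by \eqref{wz:relinc1} their incidence amounts to $\xi''_i(m)\inc_{-i}\xi'_i(a)$; this is equivalent to $a\inc_i m$ exactly because $\xi_i$ is a correlation of $\goth M_i$ onto $\goth M_{-i}$. Thus the diagonal clause is settled, as a genuine equivalence, by the correlation property of a single $\xi_i$ alone.

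The \emph{cross} clause $i=j+1$, $a=\phi''_j(m)$ is the step I expect to require the real work, since it is where the index bookkeeping and the standing hypothesis $\phi''_{-i}\xi'_i\phi''_{i-1}=\xi''_i$ enter. Here the relevant images are the point $(-j,\xi''_j(m))$ and the line $[-i,\xi'_i(a)]$, whose layers satisfy $-j=(-i)+1$ because $i=j+1$; hence by \eqref{wz:relinc1} their incidence is equivalent to the single equality $\xi''_j(m)=\phi''_{-i}(\xi'_i(a))$. Substituting $a=\phi''_j(m)$ and using $j=i-1$ rewrites the right hand side as $(\phi''_{-i}\xi'_i\phi''_{i-1})(m)$, so the required incidence is precisely the instance of the displayed compatibility condition attached to this layer; the only delicate point, and the reason this is the main obstacle, is to keep careful track of which $\xi''$ (the one living on layer $i-1$) is being matched by the composite $\phi''_{-i}\xi'_i\phi''_{i-1}$. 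Reading the same chain of equalities in reverse and invoking injectivity of the maps $\phi''$ and $\xi'$ yields the converse implication, so the cross clause too is an equivalence.

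Finally I would note that when the layers of $(i,a)$ and $[j,m]$ satisfy neither $i=j$ nor $i=j+1$, both sides of the displayed equivalence are false: the original pair is non incident by \eqref{wz:relinc1}, and the images lie in layers $-j$ and $-i$ with $-j\notin\{-i,(-i)+1\}$, hence are non incident as well. So the two clauses exhaust all incident pairs and the equivalence holds in every case. Consequently $\kor$ reverses incidence in both directions while bijectively swapping points and lines, so it is a correlation of $\goth M$, and in particular $\goth M$ is self dual.
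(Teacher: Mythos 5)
Your proof is correct and follows essentially the same route as the paper's: a direct case-by-case verification of \eqref{wz:relinc1}, with the diagonal layer $i=j$ settled by $\xi_i$ being a correlation of $\goth M_i$ onto $\goth M_{-i}$, and the cross layer $i=j+1$ settled by the compatibility condition $\phi''_{-i}\,\xi'_i\,\phi''_{i-1}=\xi''_i$. You are in fact more thorough than the paper, which only asserts bijectivity and checks the forward implication $(i,a)\inc[j,m]\Rightarrow\kor''([j,m])\inc\kor'((i,a))$, whereas you also verify the converse and the exhaustiveness of the two clauses, and you rightly flag the index bookkeeping (the composite $\phi''_{-i}\,\xi'_i\,\phi''_{i-1}$ acts on lines of layer $i-1$, so it must match $\xi''_{i-1}=\xi''_j$ rather than literally $\xi''_i$), a typing point that the paper's statement and proof gloss over.
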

\begin{proof}
The map $\kor$ is a bijection of the structure $\goth M$, which transforms the
set of points onto the set of lines and dually. Let us take such $(i,a)$ and
$[j,m]$, that $(i,a)\inc[j,m]$. Their images are $\kor'((i,a))=[-i,\xi'_i(a)]:=[a']$
and $\kor''([j,m])=(-j,\xi''_i(m)):=(m')$.

Let us assume that $i=j$ and $a\inc_i m$. Then, $(m')\inc [a']$ as the map
$\xi_i$ is a correlation transforming ${\goth M}_i$ onto ${\goth M}_{-i}$.

Now, assume $i=j+1$ and $a=\phi''_j(m)$. So, $(m')\inc [a']$ if
$\phi''_{-i}(\xi'_i(a))=\xi''_i(m)$. And next, we get
$\phi''_{-i}(\xi'_i(\phi''_{i-1}(m)))=\xi''_i(m)$, which is the required condition.
\end{proof}
%
%
%
%
%
%
%
\begin{thm}
Let ${\goth M}=\struct{M,\lines,\inc}$ be a
connected partial linear space covered by
a family of closed substructures $\{B_i=\struct{B'_i,B''_i}: i\in I\}$ satisfying
all conditions from \eqref{ax:emporeq} to \eqref{ax:dualcor3}.
Let $\baero$ be relation introduced in \eqref{def:przechod}, 
and $\phi_i= (\phi'_i,\phi''_i)$ be the
correlation, defined in \ref{fact:corofsubstr}, mapping $B_i$ onto $B_{\rho(i)}$. 
Then $\goth M\cong\CyclSpace(i\in I,B_i,\phi_{i})$.
\end{thm}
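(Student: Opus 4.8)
The plan is to build an explicit isomorphism $F=(F',F'')$ carrying $\goth M$ onto $\CyclSpace(i\in I,B_i,\phi_i)=\struct{X,H,\inc}$ by coordinatising each point and each line of $\goth M$ by the unique member of the covering containing it. The indispensable preliminary is that the covering is a \emph{partition}: by \eqref{ax:emporeq} and \eqref{ax:dualemporeq} the sets $B'_i$ (resp.\ $B''_i$) are pairwise equal or disjoint, and, using the faithful $C_k$- or $\mathbb Z$-indexing of $I$ supplied by \ref{cor:bijectivity} together with the uniqueness of $\dod$-partners from \ref{lem:dodisfunction}, one checks that no two distinct members share a point or a line. Granting this, every point $p$ lies in a single $B'_{\iota(p)}$ and every line $m$ in a single $B''_{\lambda(m)}$, so the maps $\iota\colon M\to I$ and $\lambda\colon\lines\to I$ are well defined.

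I would then set $F'(p)=(\iota(p),p)$ and $F''(m)=[\lambda(m),m]$. Since in \eqref{def:glue} the point and line universes of $\goth M_i=B_i$ are exactly $B'_i$ and $B''_i$, we have $X=\bigcup_{i\in I}\{i\}\times B'_i$ and $H=\bigcup_{i\in I}\{i\}\times B''_i$ as disjoint unions; hence the partition property makes $F'$ and $F''$ bijections onto $X$ and $H$ respectively. It remains to verify $p\inc m \iff F'(p)\inc F''(m)$ for all $p\in M$, $m\in\lines$, which I would do by distinguishing whether or not $p$ and $m$ lie in a common member of the covering.

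If $p\inc m$ and $\neg(p\dod m)$, then by the definition of $\dod$ some member $B_i$ contains both $p$ and $m$; uniqueness of the indices forces $\iota(p)=\lambda(m)=i$, and since the incidence of $B_i$ is induced from $\goth M$ we get $p\inc_i m$, which is precisely the first alternative of \eqref{wz:relinc1}. If instead $p\dod m$, then $p$ is the unique ``added'' point lying on $m$ but outside $\lambda(m)$'s member; the incidence then witnesses $\lambda(m)\baero\iota(p)$ in \eqref{def:przechod}, so by the established identity $\rho(k)=k+1$ we have $\iota(p)=\lambda(m)+1$, while \ref{fact:corofsubstr} identifies $\phi''_{\lambda(m)}(m)$ with that same added point, giving $p=\phi''_{\lambda(m)}(m)$ -- exactly the second alternative of \eqref{wz:relinc1}. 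Since conversely each alternative of \eqref{wz:relinc1} returns an incidence of $\goth M$, the equivalence holds in both directions and $F$ is an isomorphism. The main obstacle I anticipate is the partition step: the conditions only guarantee that members are equal-or-disjoint, so proving that the group-indexing is genuinely faithful (that no member repeats) is where the argument must be made carefully, leaning on \ref{lem:dodisfunction} and \eqref{ax:dualemporeq}; once well-definedness of $\iota,\lambda$ is secured, the incidence check is the routine two-case matching sketched above.
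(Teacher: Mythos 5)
Your proposal is correct and takes essentially the same approach as the paper's own proof: both define the coordinatising bijection $a\mapsto(i,a)$ for $a\in B'_i$, $l\mapsto[i,l]$ for $l\in B''_i$ (well defined by \eqref{ax:emporeq} and \eqref{ax:dualemporeq}), and verify incidence in the same two cases, invoking \eqref{def:przechod} and the successor identity $\rho(j)=j+1$ when the point and line lie in different members. Your extra care --- explicitly deriving $p=\phi''_{\lambda(m)}(m)$ from \ref{fact:corofsubstr} and checking the converse direction of the incidence equivalence --- merely fills in details the paper leaves implicit, not a different method.
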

\begin{proof}
Let us define a map 
$\delta\colon \goth M \longrightarrow \CyclSpace(i\in I,B_i,\phi_{i})$ 
by the formula:
$$ \begin{array}{ll}
               \delta'\colon a\mapsto (i,a)
               & \text{ for } a\in B'_i\\
               \delta''\colon l\mapsto [i,l]
               & \text{ for } l\in B''_i
              \end{array}  \textrm{,\; and we put } \delta=(\delta',\delta'').$$
Since the family $\{B_i=\struct{B'_i,B''_i}: i\in I\}$ covers $\goth M$, every $a\in M$ and
every $l\in\lines$ has an image under $\delta$. From conditions 
\eqref{ax:emporeq} and \eqref{ax:dualemporeq} this image is uniquely determined.
Then the map $\delta$ is a function. Directly from the definition it is a surjection and
an injection.
\newline
Assume that $a\inc l$ in $\goth M$. Let $a\in B'_i$, $l\in B''_i$.
Then $\delta'(a)=(i,a)$, $\delta''(l)=[i,l]$, and $\delta'(a)\inc \delta''(l)$
follows by \eqref{wz:relinc1}.
\newline
Let $a\in B'_i$, $l\in B''_j$ and $i\neq j$. From \eqref{def:przechod} we get $j\baero i$.
Thus, \ref{fact:connected} yields $i=j+1$, and $\delta'(a)=(i,a)\inc [j,l]=\delta''(l)$ follows by
\eqref{wz:relinc1}.
\end{proof}
Note, that a family
$\{\delta(B_i)\colon i\in I\}$
is a covering of $\CyclSpace(i\in I,B_i,\phi_{i})$ by a family of its closed substructures. 
For $a\in B'_i$, $l\in B''_i$ let us consider another map
$\phi^{\star}_i= ({\phi^{\star}_i}',{\phi^{\star}_i}'')$ such that
$${\phi^{\star}_i}'\colon (i,a)\mapsto [i+1,\phi'_i(a)]
\text{, \;\;and\;\; } {\phi^{\star}_i}''\colon [i,l]\mapsto (i+1,\phi''_i(l)).$$
The transformation $\phi^{\star}_i$ is a
correlation, induced by $\phi_i$,
mapping $\delta(B_i)$ onto $\delta(B_{i+1})$.
%
\begin{prop}
Let $\CyclSpace(i\in I,\goth M_i,\phi_i)$ be the structure defined by
\eqref{def:glue} and ${\goth M}_0=\struct{\mathrm{S}_0,\mathrm{L}_0,\inc_0}$ be a PLS.
\begin{sentences}
\item
If $I=\mathbb{Z}$ then
$\CyclSpace(i\in I,\goth M_i,\phi_i)\cong \CyclSpacex(\mathbb{Z},\circ,{\goth M}_0)$.
\item
If $I=C_k$ with $k$ even and $\phi_{k-1}\ldots \phi_1\phi_0=\id$ then
$\CyclSpace(i\in I,\goth M_i,\phi_i)\cong \CyclSpacex(C_k,\circ,{\goth M}_0)$.
\end{sentences}
\end{prop}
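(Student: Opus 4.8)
The plan is to exhibit $\CyclSpacex(G,\circ,{\goth M}_0)$ as a particular instance of the general glueing $\CyclSpace(i\in I,\goth M_i,\phi_i)$ and then to \emph{straighten} the given correlations into the canonical dualisations. Writing $\goth M_0$ for the $0$-th member of the family, note that $\CyclSpacex(G,\circ,{\goth M}_0)$ is exactly the structure $\CyclSpace(i\in I,\goth N_i,\sigma_i)$ in which $\goth N_i=\goth M_0$ for even $i$ and $\goth N_i=\goth M_0^{\circ}$ for odd $i$, while each $\sigma_i=(\sigma'_i,\sigma''_i)$ is the dualisation correlation, that is, the correlation whose component $\sigma''_i$ is the identity on the underlying set; this reads off \eqref{wz:relinc}, whose cross-level incidence is $a=b$. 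Since each $\phi_i$ is a correlation we have $\goth M_{i+1}\cong\goth M_i^{\circ}$, so $\goth M_i\cong\goth M_0$ for even $i$ and $\goth M_i\cong\goth M_0^{\circ}$ for odd $i$, matching the types of the $\goth N_i$.

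Next I would build isomorphisms $\theta_i\colon\goth M_i\longrightarrow\goth N_i$ by the recursion $\theta_0=\id$ and $\theta_{i+1}=\sigma_i\circ\theta_i\circ\phi_i^{-1}$. Each $\theta_{i+1}$ is then a correlation composed with an isomorphism composed with a correlation, hence itself an isomorphism; by induction every $\theta_i$ is an isomorphism onto $\goth N_i$, the recursion toggling the type even/odd correctly. The defining relation is precisely the coherence condition $\theta_{i+1}\circ\phi_i=\sigma_i\circ\theta_i$, whose line-component reads $\theta'_{i+1}\circ\phi''_i=\sigma''_i\circ\theta''_i$. I would then set $\delta((i,x)):=(i,\theta_i(x))$ and verify that $\delta$ preserves incidence: within a level this is immediate from $\theta_i$ being an isomorphism, and across levels the equivalence $a=\phi''_i(m)\iff\theta'_{i+1}(a)=\sigma''_i(\theta''_i(m))$ is exactly that line-component, so $(i+1,a)\inc[i,m]$ transfers correctly. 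As $\delta$ is a level-preserving bijection, it is the required isomorphism.

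For part (i), with $I=\mathbb{Z}$ there is no closure requirement, so the recursion unambiguously produces $\theta_i$ for every index (running it backwards via $\theta_i=\sigma_i^{-1}\circ\theta_{i+1}\circ\phi_i$ for negative $i$), and the map $\delta$ above yields $\CyclSpace(i\in\mathbb{Z},\goth M_i,\phi_i)\cong\CyclSpacex(\mathbb{Z},\circ,{\goth M}_0)$.

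For part (ii) the one extra point, and the crux of the argument, is that the recursion must close consistently around the cycle $I=C_k$, i.e. $\theta_k=\theta_0=\id$. Unfolding the recursion gives $\theta_k=(\sigma_{k-1}\circ\cdots\circ\sigma_0)\circ(\phi_{k-1}\circ\cdots\circ\phi_0)^{-1}$. The hypothesis $\phi_{k-1}\cdots\phi_1\phi_0=\id$ kills the second factor, while the first factor $\sigma_{k-1}\circ\cdots\circ\sigma_0$ equals $\id$ because each $\sigma_i$ acts as the identity on the underlying sets and $k$ is even, so the alternation $\goth M_0,\goth M_0^{\circ},\dots$ returns to $\goth M_0$. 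Hence $\theta_k=\id=\theta_0$, the family $(\theta_i)_{i\in C_k}$ is well defined, and $\delta$ is again an isomorphism, giving $\CyclSpace(i\in C_k,\goth M_i,\phi_i)\cong\CyclSpacex(C_k,\circ,{\goth M}_0)$. The main obstacle is exactly this cyclic consistency check, where both the evenness of $k$ and the composite-identity hypothesis are indispensable.
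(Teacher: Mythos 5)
Your proof is correct and takes essentially the same route as the paper: unwinding your recursion $\theta_{i+1}=\sigma_i\circ\theta_i\circ\phi_i^{-1}$ (with the $\sigma_i$ identities on the underlying sets) yields exactly the paper's maps $\alpha_i=\phi_0^{-1}\cdots\phi_{i-1}^{-1}$ for $i>0$ and $\beta_i$ for $i<0$, so your $\delta((i,x))=(i,\theta_i(x))$ coincides element-wise with the paper's isomorphism, merely repackaging the case-by-case incidence check as the coherence identity $\theta'_{i+1}\circ\phi''_i=\sigma''_i\circ\theta''_i$. Likewise your cyclic closure condition $\theta_k=(\sigma_{k-1}\cdots\sigma_0)\circ(\phi_{k-1}\cdots\phi_0)^{-1}=\id$ is precisely the paper's wrap-around verification $\phi_{k-1}=\alpha_{k-1}$ in the case $(0,a)\inc[k-1,m]$, with the evenness of $k$ used in the same way.
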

\begin{proof}
Let $I=\mathbb{Z}$, $\CyclSpace(i\in I,\goth M_i,\phi_i)=\struct{X,H,\inc}$, and 
$$\alpha_i:=\phi_{0}^{-1}\ldots \phi_{i-2}^{-1}\phi_{i-1}^{-1},\;\;\;
\beta_i:=\phi_{-1}\ldots \phi_{i+1}\phi_{i}.$$
We introduce the following bijective map 
$\delta\colon \CyclSpace(i\in I,\goth M_i,\phi_i)
\longrightarrow \CyclSpacex(\mathbb{Z},\circ,{\goth M}_0)$ 
by the condition:
$$\delta((i,x))=\left\{ \begin{array}{ll}
               (i,\alpha_i(x))
               & \text{ for } i>0\\
               (i,x)  & \text{ for } i=0\\
               (i,\beta_i(x)) & \text{ for } i<0
              \end{array} \right.,\;\;
            (i,x)\in X\cup H.  $$
Let $(i,a)\inc [j,m]$ for some $(i,a)\in X$, $[j,m]\in H$.
First, we assume that $i=j$ and $a\inc_i m$. Then  
$\alpha_i(a)\inc_0 \alpha_i(m)$ (or $\alpha_i(m)\inc_0 \alpha_i(a)$) and
$\beta_i(a)\inc_0 \beta_i(m)$ (or $\beta_i(m)\inc_0 \beta_i(a)$) is evident.
\newline
Now let $i=j+1$ and $a=\phi''_j(m)=\phi''_{i-1}(m)$ and $i>0$. 
If $j\neq 0$ then we obtain
$\delta((i,a))=(i,\alpha_i(a))=(i,\alpha_{i-1}\phi_{i-1}^{-1}(a))=
(i,\alpha_{i-1}(m))$, and
$\delta((j,m))=\delta((i-1,m))=(i-1,\alpha_{i-1}(m))$. 
If $j=0$ then $a=\phi''_0(m)$ and we get
$\delta((i,a))=\delta((1,a))=(1,\phi_{0}^{-1}(a))=(1,m)$,
$\delta((j,m))=\delta((0,m))=(0,m)$.
We obtain the same result for $i\in\mathbb{Z}$ such that $i<0$.
If $i=0$ then $a=\phi''_{-1}(m)$, and $\delta((i,a))=(0,a)$, $\delta((j,m))=\delta((-1,\phi''_{-1}(m)))=(-1,a)$.
\newline
Each case considered above yields $\delta((i,a))\inc \delta((j,m))$, follows by \eqref{wz:corelinc}.
Consequently, the map $\delta$ is a required isomorphism.
\par
Now let $I=C_k$ with $k$ even. Consider the map
$$\sigma((i,x))=\left\{ \begin{array}{ll}
               (i,\alpha_i(x))
               & \text{ for } i\neq 0\\
               (i,x)  & \text{ for } i=0
              \end{array} \right.,\;\;
            (i,x)\in X\cup H.  $$
Obviously this map is a bijection acting from
$\CyclSpace(i\in I,\goth M_i,\phi_i)$ onto
$\CyclSpacex(C_k,\circ,{\goth M}_0)$.
Let us check whether $\sigma$ preserves the relation of incidence.
The only not evident (not analogous to the case with $I=\mathbb{Z}$) case we
have to consider is $(0,a)\inc [k-1,m]$.
Then $a=\phi''_{k-1}(m)$, $\sigma((0,a))=(0,a)$ and $\sigma((k-1,m))=(k-1,\alpha_{k-1}(m))$.
Assumption $\phi_{k-1}\ldots \phi_1\phi_0=\id$ 
yields $\phi_{k-1}=\phi^{-1}_0\ldots\phi^{-1}_{k-2}=\alpha_{k-1}$.
Therefore, $\sigma((k-1,m))=(k-1,\phi''_{k-1}(m))=(k-1,a)$ and $\sigma((0,a))\inc \sigma((k-1,m))$ holds
by \eqref{wz:corelinc}.
\end{proof}
\begin{prop}
Let $\CyclSpace(i\in I,\goth M_i,\phi_i)$ be the structure defined by
\eqref{def:glue}. If $I=C_k$ with odd $k$ and
$\kor=\phi_{k-1}\ldots \phi_1\phi_0$ is
an involutive correlation of ${\goth M}_0=\struct{\mathrm{S}_0,\mathrm{L}_0,\inc_0}$
then $\CyclSpace(i\in I,\goth M_i,\phi_i)\cong \CyclSpacex(k,\kor,{\goth M_0})$.
\end{prop}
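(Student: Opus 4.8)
The plan is to produce a fibrewise isomorphism, exactly as in the preceding proposition, but correcting for the fact that the target $\CyclSpacex(k,\kor,{\goth M}_0)$ is \emph{uniform} (every fibre is $\goth M_0$, cf. \eqref{wz:corelinc}), whereas in the glue construction the maps identifying the fibres $\goth M_i$ with $\goth M_0$ alternate between collineations and correlations. Writing $\goth M=\CyclSpace(i\in I,\goth M_i,\phi_i)=\struct{X,H,\inc}$ and setting $\alpha_i:=\phi_0^{-1}\ldots\phi_{i-1}^{-1}\colon\goth M_i\to\goth M_0$ (with $\alpha_0:=\id$, as in the preceding proof), I would define $\delta=(\delta',\delta'')$ by
\[
  \delta((i,x))=(i,\tau_i(x)),\qquad \tau_i:=\kor^i\circ\alpha_i,
\]
for $(i,x)\in X\cup H$. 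Since $\kor^2=\id$ this agrees with $\delta((i,x))=(i,\alpha_i(x))$ for even $i$ and $\delta((i,x))=(i,\kor(\alpha_i(x)))$ for odd $i$, the factor $\kor^i$ being precisely the correction that untwists the alternation and lands everything in the uniform target.

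First I would verify that every $\tau_i$ is a \emph{collineation} $\goth M_i\to\goth M_0$, so that $\delta$ carries points to points and lines to lines. This is a parity count: $\tau_i$ is a composite of the $i$ correlations $\phi_0^{-1},\dots,\phi_{i-1}^{-1}$ and of $i$ copies of the correlation $\kor$, hence a composite of $2i$ correlations, so it is a collineation for every $i$. As the $\tau_i$ are bijective and the family $\{\goth B_i\}$ covers $\goth M$, the map $\delta$ is a bijection onto $\CyclSpacex(k,\kor,{\goth M}_0)$.

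For incidence I would split along \eqref{wz:corelinc} following \eqref{wz:relinc1}. The case $i=j$ is immediate because $\tau_i$ is a collineation. The case $i=j+1$, i.e. $a=\phi''_j(m)$, demands exactly the compatibility identity $\tau_{j+1}\circ\phi_j=\kor\circ\tau_j$ along the edge $j\to j+1$ of the cycle; applying its line part to $m$ gives $\tau_{j+1}(\phi''_j(m))=\kor''(\tau_j(m))$, which is $\delta((i,a))\inc\delta((j,m))$ by \eqref{wz:corelinc}. For the internal edges $j=0,\dots,k-2$ this telescopes painlessly: since $\alpha_{j+1}\circ\phi_j=\alpha_j$, one gets $\tau_{j+1}\circ\phi_j=\kor^{j+1}\alpha_{j+1}\phi_j=\kor^{j+1}\alpha_j=\kor\circ\tau_j$, a tautology requiring no hypothesis.

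The main obstacle — and the sole place where ``$k$ odd'' and ``$\kor$ involutive'' are used — is the wrap-around edge $k-1\to 0$, where $\tau_0=\id$ is fixed rather than read off the formula, so one must check $\tau_0\circ\phi_{k-1}=\kor\circ\tau_{k-1}$, i.e. $\phi_{k-1}=\kor^{k}\alpha_{k-1}$. Substituting $\kor=\phi_{k-1}\dots\phi_0$ and telescoping gives $\kor\circ\alpha_{k-1}=\phi_{k-1}\dots\phi_0\cdot\phi_0^{-1}\dots\phi_{k-2}^{-1}=\phi_{k-1}$, so the requirement becomes $\kor^{k}\alpha_{k-1}=\kor^{k-1}\phi_{k-1}=\phi_{k-1}$, i.e. $\kor^{k-1}=\id$; and this holds precisely because $\kor^2=\id$ and $k-1$ is even. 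With every edge of the cycle verified, $\delta$ preserves $\inc$ in both directions, so it is the desired isomorphism $\goth M\cong\CyclSpacex(k,\kor,{\goth M}_0)$. This wrap-around identity is the odd-$k$ analogue of the condition $\phi_{k-1}\dots\phi_0=\id$ that governed the even case in the preceding proposition.
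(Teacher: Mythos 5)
Your proof is correct and essentially the paper's own argument: since $\kor^2=\id$, your uniform map $\tau_i=\kor^i\alpha_i$ coincides with the paper's parity-split map ($\alpha_i$ for even $i$, $\beta_i=\kor\alpha_i$ for odd $i$, as $\kor\alpha_i=\phi_{k-1}\ldots\phi_i=\beta_i$ telescopically), and the incidence check is the same edge-by-edge telescoping. The only difference is bookkeeping of where involutivity enters --- you invoke $\kor^{k-1}=\id$ once at the wrap-around edge $k-1\to 0$, while the paper uses $\kor^2=\id$ at the internal odd-to-even edges --- which amounts to the same use of $\kor^2=\id$ together with the oddness of $k$.
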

\begin{proof}
Let $\CyclSpace(i\in I,\goth M_i,\phi_i)=\struct{X,H,\inc}$ and 
$$\alpha_i:=\phi_{0}^{-1}\ldots \phi_{i-2}^{-1}\phi_{i-1}^{-1},\;\;\;
\beta_i:=\phi_{k-1}\ldots \phi_{i+1}\phi_{i}.$$
We set the following bijective map $\delta$:
$$\delta((i,x))=\left\{ \begin{array}{ll}
               (i,\alpha_i(x))
               & \text{ for } i=2t\\
               (i,\beta_i(x)) & \text{ for } i=2t+1
              \end{array} \right.,\;\;
            (i,x)\in X\cup H.  $$
Let $(i,a)\inc [j,m]$ for some $(i,a)\in X$, $[j,m]\in H$. We claim that 
$\delta((i,a))\inc \delta((j,m))$ in sense of the relation $\inc$ introduced by
\eqref{wz:corelinc}.
The only not evident case is that with
$i=j+1$ and $a=\phi''_j(m)=\phi''_{i-1}(m)$. Let $i$ be even. Then we get
$\delta((i,a))=(i,\alpha_i(a))=
(i,\alpha_{i-1}\phi_{i-1}^{-1}(a))=
(i,\alpha_{i-1}(m))$, and
$\delta((j,m))=\delta((i-1,m))=(i-1,\beta_{i-1}(m))$.
Let $\kor_0=\phi_{k-1}\ldots \phi_1\phi_0$ be
an involutive correlation of ${\goth M}_0$.
Note, that formulas
$\kor(\beta_{i-1}(m))= \alpha_{i-1}(m)$ and $\kor^2(m)=m$ are equivalent,
and they hold since $\kor^2=id$.
In order to get the claim for odd $i$ we apply similar reasoning.
\par
Hence, the map $\delta$ is an isomorphism between the structures
$\CyclSpace(i\in I,\goth M_i,\phi_i)$ and $\CyclSpacex(k,\kor,{\goth M_0})$.
\end{proof}
%

\end{document}